\newcommand{\R}{\mathbb{R}}
\tikzstyle{startstop} = [rectangle, rounded corners, 
\tikzstyle{io} = [trapezium, 
\tikzstyle{process} = [rectangle,
\tikzstyle{decision} = [rectangle, rounded corners,
\tikzstyle{arrow} = [thick,->,>=stealth]
\newcommand{\linebreakand}{%
  \end{@IEEEauthorhalign}
  \hfill\mbox{}\par
  \mbox{}\hfill\begin{@IEEEauthorhalign}
}
\newcommand{\MUL}[3]{\textsc{mul}[#1$\times$#2$\times$#3]}
\begin{document}
%
\title{Design of General Purpose Minimal-Auxiliary Ising Machines}

\author{
\IEEEauthorblockN{Isaac K. Martin\textsuperscript{\textdagger}}
\IEEEauthorblockA{Department of Mathematics\\
University of Texas at Austin\\
Austin, Texas\\
Email: ikmartin@utexas.edu}
\and
\IEEEauthorblockN{Andrew G. Moore\textsuperscript{\textdagger}}
\IEEEauthorblockA{Department of Mathematics\\
University of Texas at Austin\\
Austin, Texas\\
Email: agmoore@utexas.edu}
\linebreakand
\IEEEauthorblockN{John T. Daly}
\IEEEauthorblockA{Advanced Computing Systems\\
Laboratory for Physical Sciences\\
Catonsville, Maryland\\
Email: jtdaly3@lps.umd.edu}
\and
\IEEEauthorblockN{Jess J. Meyer}
\IEEEauthorblockA{Advanced Computing Systems\\
Laboratory for Physical Sciences\\
Catonsville, Maryland\\
Email: jess@lps.umd.edu}
\and
\IEEEauthorblockN{Teresa M. Ranadive}
\IEEEauthorblockA{Advanced Computing Systems\\
Laboratory for Physical Sciences\\
Catonsville, Maryland\\
Email: tranadive@lps.umd.edu}
}

%


\maketitle
\begingroup\renewcommand\thefootnote{\textdagger}
\footnotetext{These authors contributed equally.}
\endgroup
\begin{abstract}
    Ising machines are a form of quantum-inspired processing-in-memory computer which has shown great promise for overcoming the limitations of traditional computing paradigms while operating at a fraction of the energy use. The process of designing Ising machines is known as the reverse Ising problem. Unfortunately, this problem is in general computationally intractable: it is a nonconvex mixed-integer linear programming problem which cannot be naively brute-forced except in the simplest cases due to exponential scaling of runtime with number of spins. We prove new theoretical results which allow us to reduce the search space  to one with quadratic scaling. We utilize this theory to develop general purpose algorithmic solutions to the reverse Ising problem. In particular, we demonstrate Ising formulations of 3-bit and 4-bit integer multiplication which use fewer total spins than previously known methods by a factor of more than three. Our results increase the practicality of implementing such circuits on modern Ising hardware, where spins are at a premium.
\end{abstract}


%
\IEEEpeerreviewmaketitle

\section{Introduction}
The limitations of the von Neumann model of computing become clearer with each passing year. Therefore, it is important to explore both potential unconventional theoretical models of future computing and the hardware techniques which could enable their implementation. This paper will focus on the theoretical design of general-purpose Ising machines by attempting to specify quadratic Hamiltonians with arbitrary prescribed ground states and minimal auxiliary spins. This model is closely related to reversible Boltzmann machines, adiabatic quantum computing, and classical Hopfield artificial neural networks \cite{Cai_2023}; in fact, Ising Hamiltonians can be used to create algorithms for quantum computers \cite{Andriyash_2016}. 

Most work in the field of Ising algorithms has been focused on reformulating NP-complete and NP-hard optimization problems as the minimization of Ising Hamiltonians, including the travelling salesman, max-cut, and knapsack problems \cite{Lucas_2014}. Indeed, the max-cut problem has become a standard benchmark for physical Ising-type hardware. We are instead interested in creating Ising circuits which implement arbitrary logical functionality, especially integer multiplication. Previously, it has been demonstrated that arbitrary logic gates and ripple-carry addition circuits can be encoded as quadratic Ising Hamiltonians with minimal auxiliary spins \cite{Whitfield_2012} and hence that Ising-type systems are capable of universal computation \cite{Gu_2012}. However, the practical design of multiplication circuits turns out to be much more difficult than addition. 

The minimization of Ising Hamiltonians can be achieved with a wide variety of hardware, including optical coherent Ising Machines \cite{McMahon_2016}, simulation on D-Wave quantum annealers \cite{Andriyash_2016, Bian_2014}, digital FPGA implementations \cite{Patel_2020}, trapped ions \cite{Monroe_2021}, and analog oscillators with sub-harmonic injection locking \cite{Chou_2019, Wang_2019}. Each implementation technology has its own set of advantages and restrictions, but as all are still in the early stages of research, we are not concerned with working to specific architectural requirements, but rather with establishing general theory. As such, we will not attempt to minimize interaction strength dynamic range, ground state energy gap, or graph connectivity. The Ising systems discussed in this paper are zero-temperature infinite range classical spin glasses with real number interaction strengths.

The contributions of this paper are two-fold. The first is a new mathematical theory of Ising circuits which adds clarity to the problem, reveals connections to Boolean analysis and machine learning, and yields powerful theoretical results that dramatically reduce the complexity of designing Ising circuits. The second is a pair of nondeterministic algorithms which exploit our mathematical theory; combined with a number of notable optimizations, they can be applied to solve arbitrary reverse Ising problems. Section \ref{sec:theory} details the development of our theory and the proofs of our core results. As a showcase of its utility, we include a new constructive algorithmic proof of the universality of Ising systems which follows immediately from results in pseudo-Boolean optimization. Section \ref{sec:algorithms} deals with the implementation of our algorithms, the design of an optimized Mehrotra predictor-corrector method, and a number of optimizations derived from the structure and symmetry of the reverse Ising problem. In Section \ref{sec:results} we apply our theory and algorithms to the 3-bit and 4-bit integer multiplication circuits. Our solutions reduce the total number of spins required to implement these circuits by a factor of more than 3 compared to previous work.

\section{Theory}\label{sec:theory}
Here we establish a theoretical framework of Ising circuits and the reverse Ising problem. Ising spins are objects with a binary state space $\Sigma$, variously referred to as `up and down' or `1 and $-1$' or `1 and 0'. Different formulations are convenient for different situations, and when it is relevant we will explicitly state whether we are using $\Sigma = \{-1,1\}$ or $\Sigma = \{0,1\}$ convention. Note that though all qualitative results are interchangeable under a change of variables. 

\subsection{The Reverse Ising Problem}\label{sec:reverse_ising_problem}
Throughout this paper we replace the lattice $\Lambda$ in the traditional Ising model with an arbitrary finite set $X \subset \mathbb N$ and define $\Sigma^X$ to be the set of all functions $X\to \Sigma$. Such a function $\sigma$ assigns an Ising spin state to each element of $X$, and hence $\Sigma^X$ the set of all possible Ising states or configurations of $X$. For this reason we call $\Sigma^X$  the \textit{spin space} of $X$. See \cite{history} for historical conventions. When $A\subseteq X$ is a subset, for every spin state $\sigma \in \Sigma^X$ the restriction $\sigma|_A$ is an element of $\Sigma^A$ and thus represents the state of the subset $A$.

Since we are primarily interested in the spin space $\Sigma^X$, it might seem more natural to instead take $X$ to be some positive integer and $\Sigma^X$ to be the collection of vectors in $\mathbb R^{|X|}$ valued in $\Sigma = \{0,1\}$ or $\{\pm 1\}$. This convention better reflects the implementation of our algorithms but introduces the need to carefully track coordinates. We default to the coordinate-free approach as it streamlines notation and is far more convenient for mathematical formalism. The two conventions are nonetheless entirely equivalent and one can freely move between them by replacing $X$ with its cardinality $|X|$, choosing coordinates on $\mathbb R^{|X|}$, and identifying each spin state $\sigma \in \Sigma^X$ with a corresponding vertex of the hypercube.

\subsubsection{Circuits} A \textbf{circuit} is a triple $(N,M,f)$ where $N,M\subseteq X$ are disjoint subsets of $X$ called the collections of \textit{input} and \textit{output} spins respectively, and $f$ is the logic function $f:\Sigma^N\to \Sigma^M$ mapping spin states of $N$ to spin states of $M$. The collection $A = X \setminus (N\cup M)$ of spins which are neither input nor output spins is called the set of \textit{auxiliary spins}. The spinspace $\Sigma^X$ can now be canonically identified with $\Sigma^N \times \Sigma^M\times \Sigma^A$ by identifying each spin state $\sigma \in \Sigma^X$ with $(\sigma|_N, \sigma|_M, \sigma|_A)$.

For a fixed $\sigma \in \Sigma^X$ it is sometimes useful to consider the collection $\cL_\sigma = \{\sigma' \in \Sigma^X \mid \sigma'|_N = \sigma|_N\}$ of all spin states matching $\sigma$ in the input component. We call $\cL_\sigma$ the $\sigma$-\textit{input level} of the circuit $(N,M,f)$, or when the choice of circuit $(N,M,f)$ is clear, simply the \textit{input level of} $\sigma$.

\subsubsection{Ising Systems}\label{subsubsec:ising_systems} An \textbf{Ising system} is a pair $(X,H)$ where $X$ is a finite set whose elements are called \textit{spins} and $H:\Sigma^X\to \mathbb R$ is a \textit{quadratic pseudo-Boolean polynomial} (see Section \ref{subsec:pseudo_boolean}) called the \textit{Hamiltonian of $X$}. Classically, the linear coefficients of the Hamiltonian are called \textit{local biases} while the quadratic coefficients are called \textit{coupling coefficients}. 
The likelihood of observing an Ising system in a state $\sigma \in \Sigma^X$ at a temperature $T$ is given by the \textbf{configuration probability} or \textbf{Boltzmann probability}
\begin{align}
  P_\beta(\sigma) = \frac{e^{-\beta H(\sigma)}}{Z_\beta}
\end{align}
where $\beta = (k_BT)^{-1}$ is the inverse temperature, $k_B$ is the Boltzmann constant, and the normalization constant $Z_\beta$ is the partition function $Z_\beta = \sum_{\sigma \in \Sigma^X} e^{-\beta H(\sigma)}$. In the low-temperature limit, the probability that the system will be in its ground state is 1. 

There are many ways to write the Hamiltonian $H$ of an Ising system $(X,H)$. We make quick note of the two most useful conventions here.
\begin{enumerate}
    \item For distinct $i,j\in X$ denote by $h_i$ the local bias of $i$ and by $J_{ij}$ the coupling strength of $i$ and $j$. For a spin state $\sigma\in \Sigma^X$ we can then write
    \begin{align}\label{eqn:hJ_convention}
        H(\sigma) = \sum_{i\in X} h_i\sigma(i) ~+~ \sum_{i < j}J_{ij}\sigma(i)\sigma(j).
    \end{align}
    \item Interpreting $\sigma\in \Sigma^X$ as a vector with entries in $\pm 1$ and denoting by $\sigma\otimes \sigma$ the outer product, define the \textit{virtual spin} of $\sigma$ to be $\sigma$ concatenated with its $\sigma \otimes \sigma$: $v(\sigma) = (\sigma, \sigma\otimes \sigma)$. We can then write $H(\sigma)$ as the inner product
    \begin{align}\label{eqn:virtual_spin_convention}
        H(\sigma) = \langle u, v(\sigma)\rangle,
    \end{align}
    where $u$ is the \textit{coefficient vector} of $H$\footnote{As stated, these two conventions for the Hamiltonian are exactly equivalent if we allow a constant term added in Equation \ref{eqn:hJ_convention}. However, the outer product $\sigma \otimes \sigma$ is a symmetric matrix with nonzero diagonal, and thus appears to introduce extra terms in the Hamiltonian. These go away once you expand the inner product, in which case $J_{ij} = u_{ij} + u_{ji}$. In practice, we only consider the upper triangular portion of $\sigma\otimes \sigma$ to reduce dimensionality. We therefore think of $v$ as an embedding $\Sigma^X \hookrightarrow \Sigma^{|X| + {\binom{|X|}{2}}}$. This latter spin space is not physical, hence the term ``virtual spin''.}.
\end{enumerate}

\subsubsection{The Reverse Ising Problem} Given a circuit $(N,M,f)$ with indexing set $X$, we wish to design an Ising system $(X,H)$ whose behavior realizes $(N,M,f)$ with high probability. We will now make this problem precise.

We assume the input spin states can be fixed while the rest of the system evolves freely according to Ising dynamics\footnote{There are multiple ways to accomplish this depending on the specific hardware implementation. For instance, the input local biases can be made to dominate the other terms of the Hamiltonian or the inputs can be made to be ferromagnetic pairs.}. For an input state $\sigma\in \Sigma^N$ we therefore wish to maximize the probability that the output spins of the Ising system are found in the correct state $f(\sigma)$. In the low temperature limit $(\beta \gg 1)$ this is a simple optimization problem: find a Hamiltonian $H : \Sigma^N \times \Sigma^M \times \Sigma^A \longrightarrow \mathbb{R}$ such that for each input state $\sigma \in \Sigma^N$, whenever $\eta\in \cL_\sigma$ minimizes the Hamiltonian $H$ on the input level $\cL_\sigma$, $\eta|_M$ is equal to $f(\sigma)$:
\begin{align}
    \eta \in \argmin_{\omega \in \cL(\sigma)} H(\omega) \implies \eta|_M = f(\sigma).
\end{align}
The circuit data prescribe only a preferred output component for every input state. If we additionally have an auxiliary function $g:\Sigma^N\to \Sigma^A$ prescribing a preferred auxiliary component, then this becomes a linear programming problem in the coefficients of $H$: Given a circuit $(N,M,f)$, find an Ising system $(X,H)$ such that for every input state $\sigma \in \Sigma^N$, the following constraints are satisfied:
\begin{align}\label{eqn:weak-constraints}
    H(\sigma, \omega, \eta) > H(\sigma, f(\sigma), g(\sigma))
\end{align} 
for all $\eta \in \Sigma^A$ and all incorrect output states $f(\sigma) \neq \omega\in \Sigma^M$. These constraints can be written as the vector inequality
\begin{align}\label{eqn:constraint_matrix}
    Bu > 0
\end{align}
where $u$ is the coefficient vector of the Hamiltonian $H$ and $B$ is the \textit{constraint matrix} whose rows are given by the differences $v(\sigma,\omega, \eta) - v(\sigma, f(\sigma), g(\sigma))$.

We call the problem of finding such an auxiliary function $g$ along with a suitable Hamiltonian the \textbf{\textit{Reverse Ising Problem}}, and if the above constraints are satisfied, we say $(X,H)$ \textbf{solves} $(N,M,f)$. The data of $(N,M,f)$ and $(X,H)$ together is called an \textit{Ising circuit.} It is important to note that because finding $H$ is simply a linear programming problem, the determination of $g$ is the source of nearly all the difficulty. We say that a choice of $g$ is \textit{feasible} if the linear problem given in Equation \ref{eqn:weak-constraints} is feasible. Our primary goal is to find feasible choices of $g$ for a given circuit $(N,M,f)$---furthermore, we would like $A$ to be a small as possible, since spin sites are expensive. In other words, we are seeking minimal-size auxiliary maps that make feasible the realization of a given function $f$ as an Ising Hamiltonian. 

\subsection{General Observations}\label{sec:pseudo-boolean}
It is not obvious at a glance whether or not the reverse Ising problem is solvable for all circuits. In fact, it is always solvable. In this section, we cover established results from Boolean analysis which give us a constructive algorithmic proof of this result. We also discuss the relationship of Ising circuits to Hopfield networks and Support Vector Machines (SVMs), and resolve some apparent difficulties resulting from the comparison. 

\subsubsection{Universality of Quadratic Ising Systems}\label{subsec:pseudo_boolean}

We will work with $\Sigma = \{0, 1\}$. A \textit{pseudo-Boolean function} is any function of the form $f:\Sigma^n\to \mathbb R$. It is a basic result in Boolean analysis that every pseudo-Boolean functions $f$ has a unique representation as a multilinear polynomial 
\begin{align}
    \sum_{H \subseteq \{1,\dots,n\}} c_H \prod_{i \in H} x_i
\end{align}
where the $c_H$ are the Hadamard coefficients of $f$ \cite{Boros_2002}. We may therefore regard any pseudo-Boolean function as a multilinear polynomial. Each degree $n$ monomial refers to an $n$-local spin interaction, so a physically realizable Ising Hamiltonian is a quadratic pseudo-Boolean polynomial. It is easy to see that every circuit $(N, M, f)$ is solvable with a higher degree Hamiltonian polynomial $H: \Sigma^{N+M} \longrightarrow \mathbb{R}$ and zero auxiliaries by letting $H(\sigma, \eta) = d(\eta, f(\sigma))$ where $d$ is the Hamming distance (note that $H(\sigma, \eta) \geq 0$ and $H(\sigma, \eta) = 0 \iff \eta = f(\sigma)$). However, while some work has been done on higher-order spin interactions in Ising circuit design \cite{Bashar_2023}, higher-order spin interactions are generally regarded as unphysical and/or infeasible to implement. Fortunately, \textit{quadratization} techniques exist to convert higher-order polynomial minimization problems into quadratic unconstrained binary optimization (QUBO) problems. 

A \textbf{quadratization} of a pseudo-Boolean polynomial $p(\vec{x})$ is a quadratic pseudo-Boolean polynomial $q(\vec{x}, \vec{a})$ such that 
\begin{align}
\min_{\vec{a} \in \Sigma^A} q(\vec{x}, \vec{a}) = p(\vec{x}).
\end{align}
A wide variety of quadratization techniques exist \cite{Dattani_2019}, but the best general purpose algorithm in terms of minimizing the number of auxiliary values $A$ which are added is the Rosenberg reduction algorithm, which substitutes a product $x_ix_j$ with a new auxiliary variable $a$ by adding the penalty term\footnote{$C > \sum |c_H|$ is sufficient.} $P = C(xy - 2ax - 2ay + 3a)$ \cite{Rosenberg_1975, Biamonte_2008}. It is easy to check that $P \geq 0$ and $P = 0 \iff a = xy$, so substitution is guaranteed at global minima. In particular, since this process always terminates on finite polynomials, combining the previous two remarks by applying Rosenberg quadratization to the Hadamard transform of the Hamiltonian $H(\sigma, \eta) = d(\eta, f(\sigma))$ leads to the following guarantee:

\begin{prop}
    The reverse Ising problem is solvable for any circuit.
\end{prop}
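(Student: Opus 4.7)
The plan is to combine the two observations made in the paragraph immediately preceding the statement. The higher-degree pseudo-Boolean Hamiltonian $H_0(\sigma,\eta) = d(\eta, f(\sigma))$ on $\Sigma^N \times \Sigma^M$ already solves the circuit $(N,M,f)$ with zero auxiliaries and satisfies the strict inequality in Equation \ref{eqn:weak-constraints}, since $d(\eta, f(\sigma)) \geq 1$ whenever $\eta \neq f(\sigma)$. The obstacle is that $H_0$ is generally not quadratic; the plan is to apply Rosenberg reduction iteratively to quadratize it, and then to check that the resulting quadratic polynomial, regarded as a Hamiltonian on an enlarged spin space $\Sigma^N \times \Sigma^M \times \Sigma^A$, still strictly separates the correct output component from every incorrect one.

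Concretely, I would first expand $H_0$ into its multilinear form, guaranteed by the universal representation of pseudo-Boolean functions as multilinear polynomials, obtaining an explicit representation with finitely many monomials. I would then iterate the Rosenberg step: pick a monomial of degree greater than two, isolate a pair $x_i x_j$ inside it, introduce a fresh auxiliary $a$, replace $x_i x_j$ by $a$ wherever it appears, and add the penalty $C(x_i x_j - 2 a x_i - 2 a x_j + 3a)$ with $C$ chosen larger than the current sum of absolute values of the coefficients, as the cited footnote prescribes. Each step strictly reduces the maximum degree without ever increasing it, so after finitely many iterations we obtain a quadratic pseudo-Boolean polynomial $H$ on $\Sigma^N \times \Sigma^M \times \Sigma^A$ for some auxiliary index set $A$.

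The key verification is that this construction yields a genuine quadratization, namely $\min_{\eta \in \Sigma^A} H(\sigma, \omega, \eta) = H_0(\sigma,\omega)$ for every $(\sigma,\omega)$, with a \emph{unique} minimizing $\eta$ that is a deterministic function of $(\sigma,\omega)$ obtained by evaluating each replaced product in turn. This I would prove by induction on the number of Rosenberg steps: each penalty is nonnegative and vanishes exactly when the new auxiliary equals the product it replaces, so minimization over that one auxiliary recovers the previous polynomial. Taking $g(\sigma)$ to be the auxiliary assignment produced by this deterministic rule evaluated at $(\sigma, f(\sigma))$, we obtain $H(\sigma, f(\sigma), g(\sigma)) = H_0(\sigma, f(\sigma)) = 0$, while for any $\omega \neq f(\sigma)$ and any $\eta \in \Sigma^A$,
\begin{align*}
H(\sigma,\omega,\eta) \;\geq\; \min_{\eta' \in \Sigma^A} H(\sigma,\omega,\eta') \;=\; d(\omega, f(\sigma)) \;\geq\; 1 \;>\; 0,
\end{align*}
which is precisely Equation \ref{eqn:weak-constraints}.

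The only real obstacle is bookkeeping: making sure that the penalty weight $C$ is updated conservatively after each substitution so that, at every intermediate stage, the penalty continues to dominate every incorrect assignment of the new auxiliary in the presence of previously introduced penalty terms. Beyond this, the argument is essentially the composition of two quoted facts---the universal multilinear representation of pseudo-Boolean functions and the correctness of Rosenberg reduction---together with the observation that Rosenberg's auxiliary-minimization guarantee is exactly what the reverse Ising problem asks for, once we read it through the identification of the induced $g$ with the minimizer of the penalty terms.
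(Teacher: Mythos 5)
Your proposal is correct and is essentially the paper's own argument: the paper proves this proposition exactly by taking the Hamming-distance Hamiltonian $H(\sigma,\eta)=d(\eta,f(\sigma))$, expanding it via the multilinear (Hadamard) representation, and applying Rosenberg quadratization, with the quadratization guarantee $\min_{\vec a}q(\vec x,\vec a)=p(\vec x)$ supplying the needed strict separation. Your write-up merely makes explicit the bookkeeping (choice of $C$, induction on substitution steps, definition of $g$ as the induced minimizer) that the paper leaves as ``easy to check.''
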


\begin{example}
    It follows from an interesting quadratization result of Boros, Crama, and Rodr\'iguez-Heck \cite{boros_2020} that an Ising circuit which evaluates the parity of $n$ input bits has an elegant closed form solution. Let $p(x)$ be the parity check function on $\{0, 1\}^n$ which returns 1 if $\sum x_i$ is even and zero otherwise. Note that $\argmin_{y \in \{0,1\}} p(x,y) = p(x)$, so $p(x,y)$ is a valid higher degree Hamiltonian for the parity check circuit. Following the paper's result, this Hamiltonian can be quadratized with $\ell := \lceil \log(n+1) \rceil -1$ auxiliaries; explicitly, $H(x,y,a)$ is
    \begin{align}
        \left(\sum_{i=1}^n x_i + y + 2^{\ell +1} - \sum_{i=1}^\ell 2^i a_i - (n \mod 2)\right)^2 
    \end{align}
    In combination with the main result of this paper, this implies that parity-checking auxiliary bits can be glued to a circuit at relatively low cost. This has important applications for the implementation of LDPC (Low Density Parity Check) encoding and decoding circuits as Ising circuits \cite{Bian_2014}. 
\end{example}

\subsubsection{The Storage Capacity Paradox}\label{subsubsec:storage_capacity_paradox}

The reader who is familiar with storage capacity estimates for Hopfield networks may be somewhat suspicious that what we are doing is possible: because an Ising system as we have defined it is equivalent to storing $2^N$ patterns in a Hopfield network with $2N + M + A$ neurons, and common wisdom states that Hopfield networks have a storage capacity of $\sim 0.139k$ where $k$ is the number of neurons, then $2^N \simeq 0.139(2N + M + A)$, and hence $A \simeq 7.19(2^N - 0.139(2N + M)) \sim \mathcal{O}(2^N)$. This is not exactly a `minimal number of auxiliaries'. However, that estimate refers only to the number of `linearly independent' states which can be stored using the Hebbian learning rule \cite{Hertz_1991}. In fact, a famous result of Parisi shows that the expected number of ground states in an Ising system with i.i.d. Gaussian interaction strength is roughly $2^{0.2k}$ \cite{Posner_1985, Talagrand_2006}. In an ideal world, we could make use of all these ground states. This would mean that $2^N \simeq 2^{0.2(2N + M + A)}$, so $N \simeq 0.2(2N + M + A)$, so $A \simeq 1.6N - 0.2M$. In the case of $n\times n$ integer multiplication, $N = M = 2n$, so we would get $A \simeq 2.8n$. This back-of-the-envelope calculation shows that storage capacity bounds do not \textit{a priori} forbid the possibility of practically useful Ising circuits with quite small numbers of auxiliary spins.

\subsubsection{Single-Output Ising Circuits are SVMs}\label{subsubsec:svms_are_ising_circuits}

Intuition is greatly aided by concrete analysis of simple cases. Let $\Sigma = \{\pm 1\}$. Consider the case that $M=1$, i.e. a circuit with a single output bit. We can express any quadratic Hamiltonian as $H(\vec{x}, y) = y\tilde{H}(\vec{x}) + R(\vec{x})$ where $\tilde{H}$ is linear and $R$ is homogeneous and quadratic. If we insist that wrong states have energy at least 1 higher than correct states (as is usually done in practice for numerical convenience; see Equation \ref{eq:nonstrict}), then the constraint set for the reverse Ising problem with zero auxiliaries is
\begin{align}
    H(\vec{x}, f(x)) + 1 \leq H(\vec{x}, -f(x)) &&\forall \vec{x} \in \Sigma^N
\end{align}
Note then that 
\begin{align}
    &H(\vec{x}, f(x)) + 1 \leq H(\vec{x}, -f(x)) \\
    \iff &f(x)\tilde{H}(\vec{x}) + 1 \leq -f(x)\tilde{H}(\vec{x})\\
    \iff &f(x)(-2\tilde{H}(\vec{x})) \geq 1
\end{align}
Since $-2\tilde{H}(x)$ is linear, it is expressible as $\langle w, x\rangle - b$ for $w \in \mathbb{R}^N, b \in \mathbb{R}$. This shows that the constraint set for the reverse Ising problem is precisely the constraints for the hard-margin SVM problem, where getting the correct output is understood as a binary classification problem. Since $\Sigma^N$ embeds into $\R^N$ as the vertices of the $N$-dimensional hypercube, it follows that:

\begin{prop}\label{prop:svm_is_one_bit_Ising}
    A circuit $(N, 1, f)$ is solvable without auxiliaries if and only if $f$ is a threshold function on the $N$-hypercube.
\end{prop}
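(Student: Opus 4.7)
The proposition has essentially been proved by the discussion preceding it; what remains is to organize the observation into a clean two-direction equivalence and be careful about two small points: (i) the relation between the strict inequalities of Equation~\ref{eqn:weak-constraints} and the margin-1 inequalities used above, and (ii) the fact that a threshold function is by definition one of the form $f(\vec{x}) = \mathrm{sign}(\langle w, \vec{x}\rangle - b)$ for some $w \in \R^N$, $b \in \R$ whose associated affine form is nonzero on every vertex of the hypercube.

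For the forward direction, suppose $(X, H)$ solves $(N, 1, f)$ with $A = \emptyset$. Decompose $H(\vec{x}, y) = y\tilde{H}(\vec{x}) + R(\vec{x})$ as in the text, where $\tilde{H}$ is affine in $\vec{x}$ and $R$ is quadratic. Because $\Sigma^N$ is finite, scaling the coefficient vector of $H$ by a sufficiently large positive constant converts the strict constraints of Equation~\ref{eqn:weak-constraints} into the margin-1 form $H(\vec{x}, -f(\vec{x})) \geq H(\vec{x}, f(\vec{x})) + 1$. The computation reproduced in the excerpt then collapses this into $f(\vec{x}) \bigl(-2\tilde{H}(\vec{x})\bigr) \geq 1$ for all $\vec{x} \in \Sigma^N$. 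Writing the affine form $-2\tilde{H}(\vec{x}) = \langle w, \vec{x}\rangle - b$, we get $f(\vec{x})(\langle w, \vec{x}\rangle - b) \geq 1 > 0$, so $f(\vec{x}) = \mathrm{sign}(\langle w, \vec{x}\rangle - b)$ and $f$ is a threshold function on the hypercube.

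For the reverse direction, suppose $f$ is a threshold function, so that there exist $w \in \R^N$ and $b \in \R$ with $f(\vec{x})(\langle w, \vec{x}\rangle - b) > 0$ for every $\vec{x} \in \Sigma^N$. Since $\Sigma^N$ is finite, we may rescale $(w, b)$ by a positive constant to guarantee $f(\vec{x})(\langle w, \vec{x}\rangle - b) \geq 1$ for all $\vec{x}$. Define $\tilde{H}(\vec{x}) = -\tfrac{1}{2}(\langle w, \vec{x}\rangle - b)$ and take $H(\vec{x}, y) = y\tilde{H}(\vec{x})$, i.e.\ set $R \equiv 0$. This is a legitimate quadratic pseudo-Boolean polynomial on $X = N \cup \{y\}$: it contributes a local bias on $y$, couplings $J_{yi}$ between $y$ and each $x_i$, and an irrelevant constant, with no quadratic interactions among the $x_i$. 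Reversing the algebra of the previous paragraph, the constraints $f(\vec{x})(-2\tilde{H}(\vec{x})) \geq 1$ are equivalent to $H(\vec{x}, -f(\vec{x})) \geq H(\vec{x}, f(\vec{x})) + 1$, which in particular implies the strict constraints of Equation~\ref{eqn:weak-constraints}. Hence $(X, H)$ solves $(N, 1, f)$ with zero auxiliaries.

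There is no genuinely hard step here; the only subtleties are the rescaling argument to pass between strict and margin-1 inequalities (valid because $\Sigma^N$ is finite) and the bookkeeping that shows the constructed $H$ is indeed quadratic pseudo-Boolean in the sense of Section~\ref{subsubsec:ising_systems}. Both directions are really just the SVM dictionary applied to the decomposition $H(\vec{x}, y) = y\tilde{H}(\vec{x}) + R(\vec{x})$ with $\Sigma = \{\pm 1\}$.
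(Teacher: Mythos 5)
Your proof is correct and takes essentially the same SVM-dictionary route the paper sketches in the discussion preceding the proposition: decompose $H(\vec{x},y)=y\tilde H(\vec{x})+R(\vec{x})$, observe $R$ cancels in the constraint, and identify the surviving inequality $f(\vec{x})(-2\tilde H(\vec{x}))\geq 1$ with the hard-margin separation condition. The two-direction organization, the rescaling argument between strict and margin-1 constraints, and the explicit reconstruction of a quadratic $H$ from $(w,b)$ are just the expected fills of detail (and you are actually a bit more careful than the paper in calling $\tilde H$ affine rather than linear).
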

Each boolean function $f:\Sigma^d \to \Sigma$ can be thought of as a labeling of the vertices of an $d$-dimensional hypercube embedded in $\mathbb R^d$. The false set of $f$ is $f^{-1}(0)$ and likewise $f^{-1}(1)$ is the true set of $f$. We say that $f$ is a \textit{$d$-dimensional threshold function} if $f^{-1}(0)$ and $f^{-1}(0)$ are linearly separable; that is, when there exists some hyperplane $L_{\mathbf w, b} = \{\mathbf x \in \mathbb R^N ~\mid~ \langle \mathbf w, \mathbf x\rangle + b = 0\}$ such that $f^{-1}(0)$ is the set of vertices below the plane and $f^{-1}(1)$ is the set of vertices above the plane. These are well studied and have been counted up to $d=9$, see \cite{threshold_msc_thesis} for an overview and \cite{recent_threshold} for more recent results. We discuss the analog of soft-margin SVMs and various applications of this result in Section \ref{subsec:artificial_variables}.

\subsection{Augmented Constraints}\label{subsec:augmented_constraints}
Solutions to the reverse Ising problem na\"ively happen in two steps:
\begin{enumerate}
    \item The auxiliary problem: find an appropriate size for $A$ and an auxiliary function $g:\Sigma^N\to \Sigma^A$. This is a nonlinear nonconvex mixed-integer constrained optimization problem.
    \item The linear problem: solve the linear programming problem (Equation \ref{eqn:weak-constraints}).
\end{enumerate}

Early attempts at general algorithmic solutions to the reverse Ising problem attempt to make iterative improvements to an initial choice of auxiliary function $g:\Sigma^N\to \Sigma^A$ by measuring the feasibility of $g$ using some heuristic. A quick analysis reveals this to be unsuitable for all but the simplest circuits. Finding a feasible auxiliary function $g$ involves searching through the space of all possible auxiliary functions (a set of size $2^{A\cdot 2^N}$), and the lack of convexity in the auxiliary problem means that feasibility heuristics are of limited use. Worse, feasibility heuristics generally require a pass through the linear problem (see Section \ref{sec:algorithms} for details). A quick check of Equation \ref{eqn:weak-constraints} reveals the number of constraints scales exponentially in $A$---there are precisely $2^N\cdot (2^{M+A} - 2^A)$ constraints, each of length\footnote{The virtual spin has $N+M+A + \binom{N+M+A}{2}$ components, but the columns corresponding to combinations of spins in $N$ are always zero in the constraint matrix, and can therefore be removed.} $M+A + {N+M+A \choose 2} - {N \choose 2} \sim \mathcal{O}(N+M+A)^2$. Therefore the difficulty of the linear problem for assessing a candidate $g$ with respected to a fixed circuit $(N,M,f)$ grows like $\mathcal{O}(2^AA^2)$ as $A$ increases. The exponential scaling makes finding $g$ for all but the smallest circuits practically impossible. A more sophisticated approach is needed.

There are two obvious avenues for improvement: (1) cut down on the size of the auxiliary search space and (2) reduce the cost of the linear problem. Theorem \ref{thm:augmented_constraints} provides sizable improvements on both of these fronts by drastically reducing the search space of possible auxiliary functions and eliminating the exponential scaling in $A$, thus reducing the complexity of the linear problem to $\mathcal{O}(A^2)$ with respect to a fixed circuit. The key insight is to allow the auxiliary function $g$ to depend on both $\Sigma^N$ and $\Sigma^M$ rather than only $\Sigma^N$.
\begin{thm}\label{thm:augmented_constraints}
  Let $(N, M, f)$ be a circuit. There exists an Ising system which solves this circuit if and only if there is a function $g:\Sigma^N\times\Sigma^M \to \Sigma^A$ such that
  \begin{enumerate}[(a)]
    \item The circuit $(N\cup M, A, g)$ is solvable by an Ising system with Hamiltonian $R$ with the following additional property:
      \begin{equation}\label{eqn:weak-neutralizability}
        R(\sigma, \omega, g(\sigma,\omega)) \geq R(\sigma, f(\sigma), g(\sigma, f(\sigma)))
      \end{equation}
      for all input states $\sigma$ and output states $\omega$. Equation \ref{eqn:weak-neutralizability} is the \textbf{weak neutralizability condition}. If we instead have equality then it is the \textbf{strong neutralizability condition}. If such an Ising system (X,R) exists, we correspondingly say that $g$ is solvable weakly neutralizable or solvable strongly neutralizable.
    \item There is an Ising system $(X, H)$ which satisfies \textbf{$g$-augmented constraints}:
    \begin{align}\label{eqn:aug-constraints}
        H(\sigma, \omega, g(\sigma, \omega)) > H(\sigma, f(\sigma), g(\sigma, f(\sigma))).
    \end{align}
  \end{enumerate} 
We call $(X,H)$ the \textbf{base system} and the circuit $(N,M,f)$ the \textbf{base circuit}. We call the system $(X, R)$ the \textbf{auxiliary system} and the circuit $(N\cup M, A, g)$ the \textbf{auxiliary circuit}. 
\end{thm}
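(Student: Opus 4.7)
My plan is to prove both directions of the equivalence by constructing explicit Hamiltonians relating the three systems in play: a candidate Hamiltonian solving the base circuit, the base Hamiltonian $H$, and the auxiliary Hamiltonian $R$. The central device is a penalty-method reduction: a suitably scaled sum $H + \lambda R$ will simultaneously reward correct outputs via $H$ and pin the auxiliary component to $g(\sigma,\omega)$ via $R$.

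For the ($\Leftarrow$) direction, I take $g$, $H$, and $R$ satisfying (a) and (b) and set $\tilde H = H + \lambda R$; as a sum of quadratic pseudo-Boolean polynomials, $\tilde H$ is still a valid Ising Hamiltonian. I need to check that for every input $\sigma$, every $\omega \ne f(\sigma)$, and every $\eta \in \Sigma^A$,
\begin{equation*}
\tilde H(\sigma,\omega,\eta) > \tilde H(\sigma,f(\sigma),g(\sigma,f(\sigma))).
\end{equation*}
I split into two cases. If $\eta = g(\sigma,\omega)$, then (b) gives strict inequality on the $H$-contribution and weak neutralizability (a) gives the corresponding non-strict inequality on the $R$-contribution, so their sum is strictly positive for any $\lambda \geq 0$. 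If $\eta \ne g(\sigma,\omega)$, then solvability of the auxiliary circuit (a) gives $R(\sigma,\omega,\eta) > R(\sigma,\omega,g(\sigma,\omega)) \geq R(\sigma,f(\sigma),g(\sigma,f(\sigma)))$, so the $R$-gap is strictly positive. Letting $\delta>0$ be the minimum such positive $R$-gap and $M_H$ the maximum absolute $H$-discrepancy across the finite spin space, any $\lambda > M_H/\delta$ forces $\tilde H$ to satisfy the original strict constraints. This establishes that $(X,\tilde H)$ solves $(N,M,f)$.

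For the ($\Rightarrow$) direction, assume $(X,H')$ solves $(N,M,f)$ with some auxiliary function $g': \Sigma^N \to \Sigma^A$. For each pair $(\sigma,\omega)$ I choose $g(\sigma,\omega) \in \argmin_\eta H'(\sigma,\omega,\eta)$, breaking ties arbitrarily but deterministically. Setting $H = H'$ gives (b) immediately, because $H(\sigma,\omega,g(\sigma,\omega)) = \min_\eta H'(\sigma,\omega,\eta)$ is bounded below by any value $H'(\sigma,\omega,\eta)$, and the original strict constraints say all such values with $\omega\ne f(\sigma)$ exceed $H'(\sigma,f(\sigma),g'(\sigma)) \geq H(\sigma,f(\sigma),g(\sigma,f(\sigma)))$. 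For (a), the naive choice $R = H'$ would fail if $H'(\sigma,\omega,\cdot)$ has multiple minimizers. I resolve this by taking $R = H' + \epsilon P$, where $P$ is a quadratic pseudo-Boolean penalty constructed to strictly favor the chosen $g(\sigma,\omega)$ in the auxiliary coordinate at each input level; for sufficiently small $\epsilon>0$, the strict inequalities carried by $H'$ survive so weak neutralizability persists, while the ties among auxiliary minimizers get broken, making $R$ a true solution of the auxiliary circuit $(N\cup M, A, g)$.

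The main obstacle I expect is this tie-breaking in the forward direction: the definition of ``solves'' requires strict minimization, so if the original Hamiltonian is degenerate over auxiliaries for some $(\sigma,\omega)$, one cannot simply reuse it as $R$. The remedy via a small quadratic perturbation $\epsilon P$ is conceptually routine but requires choosing $P$ so that it is itself quadratic pseudo-Boolean (e.g., a small sum of one- and two-body terms aligned with $g(\sigma,\omega)$) and verifying that $\epsilon$ can be made small enough to simultaneously preserve strict inequalities for every wrong output. Once this is settled, the remaining work is a finite-domain compactness argument identifying the minimum $R$-gap and bounding the $H$-discrepancy, both of which are trivial on a finite spin space.
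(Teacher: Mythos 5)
Your backward direction is essentially the paper's argument verbatim: form $H_\lambda = H + \lambda R$, split into the cases $\eta = g(\sigma,\omega)$ and $\eta \ne g(\sigma,\omega)$, and choose $\lambda$ larger than the ratio of the maximal $H$-discrepancy to the minimal positive $R$-gap, both finite on the finite spin space. No issues there.

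In the forward direction you make the same move as the paper (define $g(\sigma,\omega)$ as an argmin of $H'(\sigma,\omega,\cdot)$ and try to reuse $H'$ as both $H$ and $R$), and you are right to flag the tie problem that the paper silently skips over: if $H'(\sigma,\omega,\cdot)$ has multiple minimizers, then $H'$ does \emph{not} solve the auxiliary circuit $(N\cup M, A, g)$, because solvability demands strict inequality $R(\sigma,\omega,\eta) > R(\sigma,\omega,g(\sigma,\omega))$ for every $\eta \ne g(\sigma,\omega)$. However, your proposed repair is circular. You first fix $g$ by an arbitrary deterministic tie-break and then ask for a quadratic pseudo-Boolean $P$ that ``strictly favors the chosen $g(\sigma,\omega)$ in the auxiliary coordinate at each input level.'' That requirement, applied at the tied levels, is exactly the statement that a quadratic Hamiltonian selects $g(\sigma,\omega)$ as the unique per-level minimizer --- i.e.\ that $(N\cup M, A, g)$ is solvable --- which is the very thing being proved. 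There is no reason the $g$ produced by an arbitrary tie-break should be realizable as the exact argmin of a quadratic polynomial with no further auxiliaries (general Boolean functions are not), and ``a small sum of one- and two-body terms aligned with $g$'' does not exist in general.

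The clean repair reverses the order of operations: perturb first, define $g$ second. The constraints witnessing that $H'$ solves $(N,M,f)$ are finitely many strict inequalities, hence an open condition on the coefficient vector, so a generic arbitrarily small perturbation $R := H' + \epsilon P$ (with $P$ any generic quadratic pseudo-Boolean polynomial and $\epsilon$ small) still solves $(N,M,f)$ and, by genericity, has a unique minimizer of $R(\sigma,\omega,\cdot)$ for every $(\sigma,\omega)$. Now define $g(\sigma,\omega) := \argmin_\eta R(\sigma,\omega,\eta)$. Solvability of $(N\cup M, A, g)$ by $R$ is then automatic, weak neutralizability follows exactly as in your sketch (strict for $\omega\ne f(\sigma)$ because $R$ solves the base circuit; equality for $\omega = f(\sigma)$), and the $g$-augmented constraints hold for $H := R$. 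This closes the gap in both your writeup and the paper's.
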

The proof can be found in Section \ref{subsec:moremath}.
\begin{rmk}
    Weak neutralizability depends on the logic of the circuit $(N,M,f)$; an auxiliary functions $g$ may be weakly neutralizable for some choices of $f$ but not for others. Strong neutralizability does not depend on $f$.
\end{rmk}
\begin{rmk}\label{rmk:gluing_approach}
    At first glance it seems we have made the situation worse -- Theorem \ref{thm:augmented_constraints} splits the task of finding a single Ising system which solves $(N,M,f)$ into the task of finding two Ising systems satisfying distinct constraint sets. The advantage becomes clear given the following two observations:
  \begin{enumerate}
    \item \textbf{Improvements to $g$ can be made iteratively.} It is easy to check that if both $(N\cup M, A_1, g_1)$ and $(N\cup M, A_2, g_2)$ are solvable weakly neutralizable auxiliary circuits, then they can be "glued" together to form a new circuit $(N\cup M, A_1\sqcup A_2, g_1\times g_2)$ where
      \begin{align*}
        (g_1\times g_2)(\sigma, \omega) := (g_1(\sigma, \omega), g_2(\sigma, \omega)) \in \Sigma^{A_1 \cup A_2},
      \end{align*}
      itself a solvable weakly neutralizable auxiliary circuit.
    \item \textbf{Augmented constraints do not scale exponentially in $A$.} The traditional constraints (Equation \ref{eqn:weak-constraints}) grow exponentially in $A$, but the constraint matrices of the $g$-augmented constraints \ref{eqn:aug-constraints}) have no row-wise dependence on $A$ and only grow quadratically columnwise.
  \end{enumerate}
  Observation (1) means the function $g$ can be built from libraries of known solvable weakly neutralizable functions, which shrinks the space of possible auxiliary functions so drastically that heuristic-driven brute force searches become computationally viable. Observation (2) means that the complexity of the linear solve grows only quadratically in $A$, and hence feasibility criterion are far cheaper to compute.

  We call this the \textit{gluing approach} to the reverse Ising problem.
\end{rmk}
The following two example auxiliary functions are practical in application.
\begin{example}\label{ex:trivial_weak_neutralizability}
  Suppose $g:\Sigma^N \times \Sigma^M \to \Sigma^A$ is constant in the $\Sigma^M$ component. Then any Ising system $(X, R)$ which solves the circuit $(N\cup M, A, g)$ is trivially strongly neutralizable.
\end{example}
\begin{example}\label{ex:AND_gate_strong_neutralizable}
  Let $(\{a,b\}, \{c\}, AND)$ be the 1-bit AND circuit. There exists an Ising system $(\{a,b,c\}, R)$ which solves the circuit and is strongly neutralizable. Since the substitution $a$ for $xy$ is a logical AND gate, Rosenberg reduction can therefore be viewed as the construction of solvable strongly neutralizable auxiliary functions through the successive gluing of AND gates.
\end{example}

\section{Algorithms and Optimizations}\label{sec:algorithms}
Having established our approach to the reverse Ising problem, we now turn to the task of utilizing this theory to producing concrete solutions to specific circuits. Armed with Theorem \ref{thm:augmented_constraints}, we will discuss implementations and improvements the basic approach described at the start of Section \ref{subsec:augmented_constraints}:
\begin{enumerate}
    \item Make an initial guess $g:\Sigma^N\times \Sigma^M\to \Sigma^A$ of neutralizable auxiliary function.
    \item Measure the feasibility of $g$ using a linear programming solver on the augmented constraints.
    \item If $g$ is feasible then we are done. Otherwise update the choice of $g$.
\end{enumerate}
In Section \ref{subsec:artificial_variables} we discuss a modification of the linear problem which yields a feasibility heuristic useful for guiding the reassignment of $g$. Section \ref{subsec:main_search_algos} covers the main search algorithms used to construct a feasible $g$. Section \ref{subsec:linear_solver} describes the design of a bespoke linear solver optimized for our specific problem. Sections \ref{subsec:symmetry_speedup} and \ref{subsec:constraint_filtering} detail two further optimizations which offer significant improvements to the search algorithms.

\subsection{Linear Problem with Artificial Variables}\label{subsec:artificial_variables}

Recall the statement of our problem: we wish to find $g : \Sigma^{N+M} \longrightarrow \Sigma^A$ and a quadratic pseudo-Boolean polynomial $H : \Sigma^{N+M+A} \longrightarrow \mathbb{R}$ such that
\begin{align}
\label{eq:nonstrict}
    H(\sigma, f(\sigma), g(\sigma, f(\sigma))) + 1 \leq H(\sigma, \omega, \eta)
\end{align}
for $\omega \neq f(\sigma)$. By the main theorem, this can be reduced to the set of constraints
\begin{align}
    H(\sigma, f(\sigma), g(\sigma, f(\sigma))) + 1 \leq H(\sigma, \omega, g(\sigma, \omega))
\end{align}
as long as $g$ is weakly neutralizable. Writing the Hamiltonian $H$ as the inner product $H(\sigma) = \langle u, v(\sigma)\rangle$ as Equation \ref{eqn:virtual_spin_convention}, the constraints become 
\begin{align}
    \big\langle u, ~v(\sigma, f(\sigma), g(\sigma, f(\sigma))) - v(\sigma, \omega, g(\sigma, \omega))\big\rangle \leq -1 \\
    \forall \sigma \in \Sigma^N, \omega \in \Sigma^M, \omega \neq f(\sigma)
\end{align}
Therefore, letting $B(f,g)$ be the constraint matrix whose rows are $v(\sigma, \omega, g(\sigma, \omega)) - v(\sigma, f(\sigma), g(\sigma, f(\sigma)))$, we wish to find a vector $u$ such that $B(f,g)u \geq 1$. Since most choices of $g$ will yield infeasible problems, we need a choice of \textit{feasibility heuristic} to evaluate how close $g$ is to generating a feasible problem. This can be done by adding new vector of variables $\rho$ called the \textbf{artificial variables} to obtain the linear programming problem
\begin{align}\label{eqn:artificial_variables}
    \varrho(f,g) := \min_{\rho, u} \|\rho\|_1 && \text{ s.t. } B(f,g)u + \rho \geq 1, \rho \geq 0
\end{align}
The optimal value of the objective function is zero if and only if the choice of $g$ is feasible. Otherwise, the optimal value of the objective function $\varrho(f,g)$ roughly measures how infeasible $g$ is.

\begin{rmk}
    In Section \ref{subsubsec:svms_are_ising_circuits}, we discussed the fact that the linear problem for $M=1$ is equivalent to fitting a linear hard-margin SVM. Adding the artificial variables to the $M=1$ Ising circuit is in fact exactly equivalent to the linear soft-margin SVM.
\end{rmk}

\subsection{Main Search Algorithms}\label{subsec:main_search_algos}

We will use the notation $\times$ to denote the concatenation of tuples. Armed with the artificial variables heuristic, we can attempt to build a weakly neutralizable auxiliary map $g$ that makes a given circuit $(N, M, f)$ feasible. The simplest choice is a greedy algorithm based on threshold functions.
\begin{algorithm}
\caption{Greedy Search}
\label{alg:greedy}
\begin{algorithmic}
    \Require $\{\mathcal{T}_n\}_{n \in \mathbb{N}}$ sets of strongly neutralizable threshold functions on $n$ variables.
    \Require $(N, M, f)$ Ising circuit.
    \State$g \gets \emptyset$
    \While{$\varrho(f,g) > 0$}
        \State$g \gets g \times\argmin_{a \in \mathcal{T}_{N+M+|g|}} \varrho(f,g\times a)$
    \EndWhile
\end{algorithmic}
\end{algorithm}

\noindent
It follows from Section \ref{subsec:pseudo_boolean} that if 
\begin{align}
    \mathcal{T}_n \supseteq \mathscr{A}_n := \{x \longmapsto x_i \wedge x_j | 1 \leq i < j \leq n\}
\end{align}
then Algorithm $\ref{alg:greedy}$ always terminates, since $\varrho(f,g) \geq \varrho(f, g\times a)$, for we can always set the coefficients on the new threshold function $a$ to zero and recover the left hand side. It should be noted that requiring $\mathcal{T}_n$ grow with $n$ is required for the algorithm to always terminate---fixing $\mathcal{T}_n = \mathscr{A}_{N+M}$ results in the algorithm never terminating when attempting to solve the parity check circuit with 3 input bits. 
\begin{rmk}\label{rmk:strong_neutralizability_saves_overhead}
    The $\mathcal T_n$ sets must be computed separately prior to the execution of Algorithm \ref{alg:greedy}, see \ref{subsubsec:build_libraries_of_snt} for details. We use strongly neutralizable threshold functions for convenience; since the strong neutralizability property has no dependence on the underlying base circuit $(N,M,f)$, and the $\mathcal T_n$ don't change from problem to problem. Using weakly neutralizable threshold functions should in principle yield a solution with fewer auxiliaries, since it gives many more options and thus increases the flexibility of the search. However, this adds significant overhead as the $\mathcal T_n$ sets must be computed for each new problem.

    Note also that any auxiliary function $g$ can be written componentwise as $(g_1,...,g_A)$, where each $g_i:\Sigma^N\times \Sigma^M\to \Sigma$ represents a single auxiliary bit. Since $g$ is a solvable weakly/strongly neutralizable auxiliary function if its component functions are weakly/strongly neutralizable threshold functions, building $g$ from its components always produces viable auxiliary circuits.
\end{rmk}

In practice, the greedy algorithm is clearly non-optimal. This is because $\varrho$ is not additive with respect to its components: threshold functions have unpredictable synergies with each other and thus cannot be regarded as independent. That is, for threshold functions $a$ and $b$, $\varrho(f, \emptyset) - \varrho(f, a \times b)$ differs unpredictably from $2\varrho(f, \emptyset) - \varrho(f, a) - \varrho(f, b)$, though the difference is usually not too large. Therefore, the greedy algorithm may result in earlier choices becoming non-optimal after later choices are made. This suggests a `coordinate descent' type algorithm, Algorithm \ref{alg:descent}, which takes further advantage of the artificial variables heuristic to continually revise choices towards more optimal solutions. Additionally, it makes sense to expand the search space to weakly neutralizable functions, since gluing works in the exact same way.

\begin{algorithm}
\caption{Descent Algorithm}
\label{alg:descent}
\begin{algorithmic}
    \Require $(N,M,f)$ Ising circuit.
    \Require $\mathcal{T}$ set of weakly neutralizable threshold functions with respect to circuit $(N, M, f)$. 

    \State $R \gets (g, j, a) \longmapsto \left(\hat{g}_i = \begin{cases} 
        g_i &\text{ if } i \neq j\\
        a &\text{ if } i = j
    \end{cases}
    \right)_{1 \leq i \leq |g|}$
    \State $g \gets \emptyset$
    \State $S \gets \{1, \dots, |g|\}$
    \While{$\varrho(f, g) > 0$}
        \If{$S = \emptyset$}
            \State $g \gets g \times \argmin_{a \in \mathcal{T}} \varrho(f,g\times a)$
            \State $S \gets \{1, \dots, |g|\}$
        \EndIf
        \State $j \gets \argmin_{i \in S} \varrho(f, g \setminus \{g_i\}) - \varrho(f, g)$
        
        \State $\ell \gets N+M+j-1$
        \State $\alpha \gets \argmin_{a \in \mathcal{T} \cup \mathscr{A}_\ell} \varrho(f, R(g,j,a))$ 
        \If{$\varrho(f,R(g,j,\alpha)) < \varrho(f,g)$}
            \State $g_j \gets \alpha$
            \State $S \gets \{1, \dots, |g|\}$
        \Else
            \State $S \gets S \setminus \{j\}$
        \EndIf
    \EndWhile
\end{algorithmic}
\end{algorithm}

This is a much better algorithm in practice, though it will not always produce solutions with minimum $|g|$ due to getting stuck in local minima and therefore being forced to increase $|g|$. Due to the inclusion of the $\mathscr{A}_\ell$ sets, this algorithm will always terminate in finite time for the same reason as Algorithm \ref{alg:greedy}. Plenty of tweaks can be made to the algorithm, most resulting in greater complexity:
\begin{itemize}
    \item The heuristic values which are optimized over to set $j$ are designed to select the auxiliary functions which are contributing the least---a more precise but far more computationally expensive option would be setting $j$ by optimizing over the Shapley numbers of the $g_i$. Fast approximations of Shapley numbers do exist, but we leave experimentation with their implementation for future research.
    \item The symmetries discussed in Section \ref{subsec:symmetry_speedup} can be leveraged, in combination with a cache, to reduce the number of times that the expensive function $\varrho$ is called (see Remark \ref{rmk:symmetry_speedup}).
    \item The filtering method discussed in Section \ref{subsec:constraint_filtering} can be implemented: We start with $\varrho = \varrho_i$ and increment $i$ every time the condition $\varrho(f,g) = 0$ is satisfied, breaking out of the while loop only when $i = M$. This also significantly speeds up execution time.

\end{itemize}

\subsubsection{Building Libraries of Threshold Functions}\label{subsubsec:build_libraries_of_snt} 

Remark \ref{rmk:strong_neutralizability_saves_overhead} suggests that we precompute libraries of strongly neutralizable threshold functions for use in Algorithms \ref{alg:greedy} and \ref{alg:descent}. This is done by first finding all threshold functions $f:\Sigma^d\to \Sigma$ of a fixed dimension $d$ and then exhaustively checking the strongly neutralizability condition.

There are two ways we find threshold functions of a fixed dimension $d$. This is certainly suboptimal but has thus far proven sufficient for our approach.
\begin{enumerate}
    \item The set $\Sigma^d$ can be identified with the set of integers $[0..2^d-1] = \{n \in \mathbb Z \mid 0\leq n< 2^d\}$. In this way, each Boolean function $f:\Sigma^d \to \Sigma$ can be thought of as a function $f:[0..2^d-1] \to \Sigma$, and thus corresponds to a binary string of length $2^d$ given by $[f(0), f(1), ..., f(2^d - 1)]$. There are $2^{2^d}$ such binary strings, so it quickly becomes infeasible to check them all for linear-separability. A laptop can nonetheless handle the case of $d=5$, $2^{2^5} = 4,294,967,296$ without much difficulty, especially once symmetries are utilized (see Section \ref{subsec:symmetry_speedup}).
    \item Threshold functions of dimension $d$ are linear separations of the $d$-dimensional hypercube, hence any plane $L_{\mathbf w, b}: \langle \mathbf w, \mathbf x\rangle + b = 0$ in $\mathbb R^d$ defines a threshold function. By randomly sampling $\mathbf w$ and $b$, caching functions, and comparing to proven threshold function counts (see \cite{oeis_threshfunc} for instance) one can quickly identify the majority of threshold functions up to dimension 7. 
\end{enumerate}
Both of these approaches are sped up by exploiting symmetry, see Section \ref{subsec:symmetry_speedup}.

\subsection{Linear Programming Solver}\label{subsec:linear_solver}

We found that no freely available linear programming solver was sufficiently fast or memory efficient to tackle the high volume of large-size linear problems needed to calculate the feasibility heuristic in our search algorithms. We wrote a multithreaded C implementation of the Mehrotra predictor-corrector based on \cite{Nocedal_2006} specifically optimized for our problem. It is 2-3 times faster than GLOP \cite{ortools} for highly overdetermined sparse sign matrices like the Ising constraint sets, and significantly more memory efficient. Details of the solver design can be found in Section \ref{sec:solver}. It is inspired by the work in \cite{teresa}, but trades away memory optimization for greater flexibility in choosing the coefficient matrix, among several other modifications designed to suit our problem in particular. 

\subsection{Symmetries of Ising Circuits}\label{subsec:symmetry_speedup}
We set $\Sigma = \{\pm 1\}$ unless otherwise noted. Fix a circuit $(N,M,f)$. It is clear that an auxiliary function $g:\Sigma^{N}\times \Sigma^M\to \Sigma^A$ is solvable and weakly neutralizable if and only if $-g$ is solvable and weakly neutralizable; if the Hamiltonian $H$ solves the circuit with auxiliary function $g$ then the Hamiltonian $H'$ obtained from $H$ by flipping the signs of every coefficient of an auxiliary local bias, auxiliary/input interaction or auxiliary/output interaction solves the circuit with auxiliary function $-g$. Hence if $g$ is determined to be infeasible then $-g$ must also be infeasible without performing any additional computation. It is natural to ask: are there other such transformations which preserve Ising solvability?

Let us make this situation more precise. The transformations $g\mapsto -g$ and $H\mapsto H'$ can be thought of a single transformation which satisfies the following compatibility condition: if $(X,H)$ solves the auxiliary circuit $(N\cup M, A, g)$ then $(X, H')$ solves $(N\cup M, A, -g)$. Generalizing, we wish to understand objects $\alpha$ which act on both the logic of circuits and Ising Hamiltonians and which preserve solvability. Such an object $\alpha$ is called an \textit{Ising circuit symmetry}.

There are two types of Ising circuit symmetries we consider: \textit{spin actions} and \textit{input permutations}. Relevant proofs of the details in this section are given in the Appendix.

\subsubsection{Spin Actions} Viewing $\Sigma^X$ as the set of functions $X\to \Sigma$, for each $\alpha\in \Sigma^X$ we obtain an action on $\Sigma^X$ via pointwise multiplication by $\alpha$:
\begin{align}
    \alpha\sigma (x) = \alpha(x)\cdot \sigma(x).
\end{align}
This is called a \textit{spin action}, and if spin states are instead viewed as tuples of $\pm 1$ then this is nothing more than the component-wise multiplication of $\{\pm 1\}^n$. Decomposing $\alpha$ into input and output components gives an action on $f$ by
\begin{align}
    (\alpha f)(\sigma) := \alpha|_M \cdot f(\alpha|_N\sigma).
\end{align}
We get a corresponding action on Ising systems $(X,H)$ by multiplying $v(\alpha)$ componentwise with $u$, the coefficient vector of $H$:
\begin{align}
    \alpha u := v(\alpha)\cdot p.
\end{align}
Thus defined, $\alpha$ is an Ising circuit symmetry, see Lemma \ref{prop:spin_actions_are_symmetries} for proof. The group of all spin actions on $\Sigma^X$ is easily seen to be isomorphic to $(\mathbb Z/2\mathbb Z)^n$, where $|X| = n$.

\subsubsection{Coordinate Permutations}\label{subsubsec:coordinate_permutations} Given a permutation $\alpha$ of $N$ and a permutation $\beta$ of $M$, we can define an action on $f:\Sigma^N\to \Sigma^M$ by
\begin{align}
    (\beta f \alpha)(\sigma) = f(\sigma \circ \alpha) \circ \beta.
\end{align}
The corresponding action on the coefficient vector $p$ of $H$ is easier to write using the $h$ and $J$ convention (see Equation \ref{eqn:hJ_convention}). We define the action of $\alpha$ on $h$ and $J$ to be the identity action, and we define $\beta$ to act by index permutation
\begin{align}
    \beta h_i := h_{\beta(i)}, \quad \beta J_{i,j} := J_{\beta(i),\beta(j)}
\end{align}
setting $\beta(i) = i$ whenever $i \in N$. This makes coordinate permutations Ising circuit symmetries; see Lemma \ref{prop:coordinate_permutations_are_symmetries}. The group of all coordinate permutations is the product of permutation group of orders $N$ and $M$: $S_N\times S_M$.

\subsubsection{Symmetry Speedups}
Denote by $\mathcal G(N,M)$ the group of \textit{Ising symmetries} obtained by compositions of spin actions and coordinate permutations for circuits of the form $(N,M,f)$ and by $\mathcal S(N,M)$ the group of spin actions. The following proposition and remark illustrate how symmetries of Ising circuits can speed up our algorithms.
\begin{prop}\label{prop:symmetries}
    If $\alpha \in \mathcal G(N,M)$  then $g:\Sigma^N\to \Sigma$ is a weakly/strongly neutralizable threshold function if and only if $\alpha g$ is as well. Additionally, if $\alpha\in \mathcal S(N\cup M,A)$ is a spin action on the auxiliary circuit $(N\cup M, A, g)$, then $\varrho(f,g) = \varrho(f,\alpha g)$.
\end{prop}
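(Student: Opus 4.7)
The proposition bundles two invariance statements. The plan is to treat the threshold-function and neutralizability parts of the first claim separately, and then handle the claim about $\varrho$ by a direct LP bijection.

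First I would verify that the threshold property is preserved under $\mathcal{G}(N,M)$. The characterization is geometric: $g$ is a threshold function precisely when its true and false fibers are linearly separable as vertex subsets of the hypercube embedded in $\mathbb{R}^d$. A spin action realizes componentwise sign flips on $\Sigma^d$, which is a linear isometry of $\mathbb{R}^d$; a coordinate permutation realizes a permutation of axes, also a linear isometry. Both carry affine hyperplanes to affine hyperplanes and preserve the vertex partition, so if $L_{\mathbf{w},b}$ separates the fibers of $g$ then its $\alpha$-image separates the fibers of $\alpha g$, with the converse obtained by applying $\alpha^{-1}$.

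Next I would lift this to weak and strong neutralizability. Suppose $(X,R)$ witnesses that $g$ is solvable weakly neutralizable. By Lemmas \ref{prop:spin_actions_are_symmetries} and \ref{prop:coordinate_permutations_are_symmetries}, the dual action of $\alpha$ on the coefficient vector of $R$ yields a Hamiltonian $\alpha R$ that solves the $\alpha$-transformed auxiliary circuit. The neutralizability inequality
\[
    R(\sigma,\omega,g(\sigma,\omega)) \geq R(\sigma,f(\sigma),g(\sigma,f(\sigma)))
\]
transports term-by-term to the analogous inequality for $\alpha R$ and $\alpha g$ via the symmetry identity $(\alpha R)(\alpha\eta)=R(\eta)$, after reindexing $\sigma \mapsto \alpha|_N^{-1}\sigma$ and $\omega \mapsto \alpha|_M^{-1}\omega$. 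Equality is preserved by the same argument, so strong neutralizability is invariant as well.

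For the final claim, I would construct an explicit objective-preserving bijection between the feasible regions of the LPs defining $\varrho(f,g)$ and $\varrho(f,\alpha g)$. Given $\alpha\in\mathcal{S}(N\cup M, A)$, define $(u,\rho)\mapsto (\alpha u,\pi\rho)$, where $\pi$ is the permutation of constraint rows induced by $\alpha$ on the index set $\{(\sigma,\omega):\omega\neq f(\sigma)\}$. The essential identity is
\[
    \bigl\langle \alpha u,\, v(\sigma,\omega,(\alpha g)(\sigma,\omega)) - v(\sigma,f(\sigma),(\alpha g)(\sigma,f(\sigma))) \bigr\rangle = \bigl\langle u,\, v(\tau,\mu,g(\tau,\mu)) - v(\tau,f(\tau),g(\tau,f(\tau))) \bigr\rangle
\]
for $(\tau,\mu) = \alpha^{-1}(\sigma,\omega)$, which follows from $v(\alpha\eta) = v(\alpha)\cdot v(\eta)$ (componentwise) together with the definition $\alpha u = v(\alpha)\cdot u$. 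Since $\|\rho\|_1$ is invariant under row permutation, the feasible sets match up and the optima agree, giving $\varrho(f,g)=\varrho(f,\alpha g)$.

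The main obstacle is the bookkeeping in this last identity: one has to verify that $\pi$ is well-defined (that is, that $\{(\sigma,\omega) : \omega\neq f(\sigma)\}$ is $\alpha$-invariant) and that the decomposition of $\alpha$ across the $N$, $M$, and $A$ components lines up correctly with the definition of $\alpha g$. For spin actions that fix the $N\cup M$ components this is essentially trivial; for general $\alpha\in\mathcal{S}(N\cup M,A)$ the input-output components flip too, and one must check that the virtual spin arithmetic commutes with the induced reindexing on constraint rows. Nothing here is conceptually subtle, but it is the natural place to be careful.
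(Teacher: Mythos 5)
Your proposal matches the paper's proof in all essentials: the threshold and neutralizability claims are reduced to Lemmas \ref{prop:spin_actions_are_symmetries} and \ref{prop:coordinate_permutations_are_symmetries} (your direct hyperplane--isometry argument is just the geometric face of combining those lemmas with Proposition \ref{prop:svm_is_one_bit_Ising}), and the $\varrho$-invariance is established by the same change of variables $u \mapsto v(\alpha)\cdot u$ exploiting $v(\alpha)\cdot v(\alpha) = \mathbf{1}$, so that $B(f,\alpha g)(\alpha u) = B(f,g)u$ up to a reindexing of rows. The bookkeeping issue you flag at the end (how the fixed reference output $f(\sigma)$ in each constraint row interacts with a spin action that flips $N\cup M$ components) is genuine, but the paper's own proof asserts the row correspondence without addressing it either, so your treatment is at the same level of rigor.
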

\begin{rmk}\label{rmk:symmetry_speedup}
    This proposition speeds up the search for auxiliary functions in two ways.
    \begin{enumerate}
        \item \textbf{Building libraries of threshold functions.} When computing sets $\mathcal T$ of threshold functions in Algorithms \ref{alg:greedy} and \ref{alg:descent}, one need only test one candidate threshold function $g$ for solvability and neutralizability to determine the solvability and neutralizability of the entire orbit. This eliminates
        \begin{align}
            |\mathcal G(N,M)| = 2^{N\cup M} \cdot N! \cdot M!
        \end{align}
        linear solves in the best case scenario, but far fewer in practice since the action of $\mathcal G(N,M)$ is not free.
        \item \textbf{Computing the feasibility heuristic.} Because spin actions preserve $\varrho$, two auxiliary functions $g$ and $g'$ related by a spin action are \textit{indistinguishable by the feasibility heuristic}. One can therefore take $\mathcal T$ to contain only one weakly/strong neutralizable threshold function in each $\mathcal S(N,M)$ orbit without losing any performance. This eliminates up to $|\mathcal S(N,M)| = 2^{N+M} - 1$ computations of $\varrho$.
    \end{enumerate}
\end{rmk}
\begin{proof}[Proof of Proposition \ref{prop:symmetries}]
    Combining Proposition \ref{prop:svm_is_one_bit_Ising} with Lemmas \ref{prop:spin_actions_are_symmetries} and \ref{prop:coordinate_permutations_are_symmetries} shows that spin actions and coordinate permutations preserve threshold functions. Slight modifications to the proofs of these two lemmas show that weak neutralizability is also preserved.

    If $\alpha$ is a spin action then $B(f,\alpha g)$ is obtained from $B(f, g)$ by taking the component-wise product of every row with $v(\alpha)$. It then follows that
    \begin{align}
        B(f,\alpha g) (\alpha u) = B(f,\alpha g) (v(\alpha) \cdot u) = B(f, g) u
    \end{align}
    since $v(\alpha) \cdot v(\alpha)$ is the vector consisting only of $1$'s. Thus the $\rho$ and $u$ which minimize Equation \ref{eqn:artificial_variables} for $g$ and $\alpha g$ are equal.
\end{proof}

\subsubsection{Toward a Classification of Strongly Neutralizable Threshold Functions} We switch now to $\Sigma = \{0,1\}$ convention. Spin actions and coordinate permutations have long been used in the classification of threshold functions under the names \textbf{u-complementation} and \textbf{permutation of variables} respectively \cite{threshold_book}. Another linear-separability preserving operation known as \textit{self-dualization} is common in threshold logic. Given a threshold function $f:\Sigma^n\to \Sigma$, the self-dualization of $f$ is a threshold function $f^{sd}$ of dimension $n+1$. If $s_0$ is used to denote the new variable, then it is defined
\begin{align}
    f^{sd}(s_0,...,s_n) = s_0f(s_1,...,s_n) + \overline{s_0}\overline{f}(\overline{s_1},...,\overline{s_n})
\end{align}
where $\overline{s_i}$ is used to denote the negation of the coordinate $s_i$. We say that a threshold function $f$ is self-dual if it is the self-dualization of a lower dimensional threshold function. Because self-dualization additionally preserves strong neutralizability but does \textit{not} preserve the artificial variable feasibility heuristic $\varrho$, it gives us access to non-redundant strongly neutralizable threshold functions in the context of Algorithms \ref{alg:greedy} and \ref{alg:descent}.

A more trivial way to produce higher dimensional strongly neutralizable threshold functions from lower dimensions is known as \textit{extrusion}. Given an $n$-dimensional threshold function $f:\Sigma^N\to \Sigma$, we may \textit{extrude} $f$ along a new dimension by simply ignoring the additional variable $s_0$:
\begin{align}
    f^e(s_0,...,s_n) := f(s_1,...,s_n).
\end{align}
This operation does preserve $\varrho$, and hence the strongly neutralizable threshold functions it produces are redundant.

If $f$ is self dual, then it is easy to check that $\overline{f}(\overline{s_1},...,\overline{s_n}) = f(s_1,...,s_n)$. This implies the second self-dualization of a Boolean function is an extrusion, as there is no dependence on the second added dimension. The only non-redundant strongly neutralizable threshold functions, therefore, are those which are either \textit{(i)} not self dual or \textit{(ii)} are the self-dualization of a non self-dual function. Up to Ising symmetry $\mathcal G(N,M)$, we have found only two non-redundant strongly neutralizable threshold functions: the AND gate in dimension $2$ and its self-dualization in dimension $3$. All others appear to be in the $\mathcal G(N,M)$-orbit of an extruded lower dimensional threshold function -- though we have only checked up through dimension 7. We conjecture the following classification of strongly-neutralizable threshold functions:
\begin{conjecture}
    The only strongly-neutralizable threshold functions of dimension 2 or greater which are not extrusions of lower dimensional functions are $\text{AND}$ and its self-dualization $\text{AND}^{sd}$ up to $\mathcal G(N,M)$ action.
\end{conjecture}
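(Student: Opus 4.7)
The plan is to reduce strong neutralizability of a single-bit auxiliary function $g : \Sigma^n \to \Sigma$ to a Fourier-analytic condition on an associated affine-linear function, and then classify the solutions dimension by dimension. Writing the most general quadratic Hamiltonian on the $n+1$ spins $(\sigma, y)$ as $R(\sigma, y) = Q(\sigma) + y \cdot L(\sigma)$, where $Q$ is a quadratic pseudo-Boolean polynomial in $\sigma$ and $L(\sigma) = h + \sum_{i=1}^n K_i \sigma_i$ is affine-linear, minimization over $y \in \{\pm 1\}$ yields $\min_y R(\sigma, y) = Q(\sigma) - |L(\sigma)|$, attained at $y = -\mathrm{sign}(L(\sigma))$. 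Strong neutralizability therefore amounts to producing an affine-linear $L$ that is nowhere zero on $\{\pm 1\}^n$, satisfies $g = -\mathrm{sign}(L)$, and has $|L|$ representable as a multilinear polynomial of degree at most $2$; equivalently, every Fourier coefficient $\widehat{|L|}(S)$ with $|S| \geq 3$ must vanish.

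For $n = 2$ every function on $\{\pm 1\}^2$ is automatically multilinear of degree at most $2$, so the condition is trivial, and up to $\mathcal{G}$-action the only non-extrusion threshold function is $\text{AND}$. For $n = 3$ the only nontrivial Fourier constraint is $\widehat{|L|}(\{1,2,3\}) = 0$. The key observation is that when the bias vanishes ($h = 0$), $|L|$ is invariant under $\sigma \mapsto -\sigma$, so every odd-degree Fourier coefficient vanishes for free; this exactly covers the self-dual case. The non-extrusion requirement then forces the strict triangle inequalities on the weights $K_i$, and a direct sign-pattern check identifies the resulting Boolean function uniquely as the majority function, which is $\mathcal{G}$-equivalent to $\text{AND}^{sd}$ by flipping a single input spin. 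For $h \neq 0$, I would carry out a region-by-region calculation over the sign patterns of $L$ on the eight hypercube vertices and show that the cubic Fourier coefficient vanishes only when $L$ effectively depends on at most two variables, in which case $g$ is already an extrusion.

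The main obstacle, and the reason the statement is only conjectural, is the higher-dimensional case $n \geq 4$; the authors note only $n \leq 7$ has been checked computationally. A natural strategy is induction on $n$ via subcube restriction: slicing along $\sigma_n = \pm 1$ produces two affine-linear functions $L_\pm$ in $n - 1$ variables, and $|L|$ having degree at most $2$ in $n$ variables forces each $|L_\pm|$ to have degree at most $2$ in $n - 1$ variables. By the inductive hypothesis each $g_\pm = -\mathrm{sign}(L_\pm)$ must be either an extrusion or an $\text{AND}$/$\text{AND}^{sd}$-type solution. The two halves are linked by the constraint $L_+ - L_- = 2 K_n$ (a single constant), which sharply restricts how the two $(n-1)$-dimensional pictures can be glued. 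The hardest step will be excluding the exotic cancellations among the higher-degree Fourier coefficients that might conspire to allow a non-extrusion solution: the constraints are polynomial in the weights and a priori admit many candidate branches, so upgrading the numerical evidence through $n = 7$ into a closed-form contradiction for every $n \geq 4$ will likely require a genuinely new ingredient---perhaps an analytic lower bound on $\sum_{|S| \geq 3} \widehat{|L|}(S)^2$ whenever $L$ depends essentially on all $n$ variables.
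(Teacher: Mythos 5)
The statement you are addressing is labelled a \emph{conjecture} in the paper, and the paper supplies no proof of it---only the remark that it has been verified computationally through dimension $7$. There is therefore no argument of the authors' to compare yours against; the only question is whether your proposal actually settles the conjecture, and by your own admission it does not. That said, your reduction of strong neutralizability of a single auxiliary bit to the condition that $|L|$ admit a multilinear representation of degree at most $2$, for some nowhere-vanishing affine $L$ with $g=-\mathrm{sign}(L)$, is a genuinely useful lemma that sharpens the paper's informal discussion, and your treatment of $n=2$ and of the unbiased case $h=0$ in $n=3$ is sound: the parity observation that all odd-degree Fourier coefficients of $|L|$ vanish when $h=0$ cleanly explains why the self-dualization of AND works and why it is the majority function up to a spin flip. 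Two caveats even here. First, the paper's strong neutralizability condition only requires $\min_\eta R(\sigma,\omega,\eta)$ to be constant in the output block $\omega$ for each fixed input $\sigma$, so strictly speaking only those degree-$\geq 3$ coefficients of $|L|$ involving at least one output variable must vanish; you should pin down which version of the condition you are classifying, since the conjecture is stated for abstract threshold functions without a designated input/output split. Second, the $h\neq 0$ branch of the $n=3$ case is asserted (``a region-by-region calculation \dots shows'') rather than carried out, and sign errors are easy to make in exactly that computation.

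The genuine gap is the one you name yourself: for $n\geq 4$ you have a plan, not a proof. The subcube-restriction step yields only necessary conditions---if $|L|$ has degree $\leq 2$ then so do both restrictions $|L_\pm|$---but the converse fails, and the content of the conjecture lives precisely in the Fourier coefficients of $|L|$ of degree $\geq 3$ that straddle the slice (those containing $\sigma_n$ together with at least two other variables), about which the inductive hypothesis says nothing. The gluing constraint $L_+-L_-=2K_n$ restricts the weights but does not by itself exclude the ``exotic cancellations'' you mention, and no mechanism for excluding them is proposed beyond the hope of an analytic lower bound on $\sum_{|S|\geq 3}\widehat{|L|}(S)^2$ when $L$ depends essentially on all variables. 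As it stands, the statement remains exactly what the paper says it is: a conjecture with computational support through dimension $7$, now equipped with a promising Fourier-analytic reformulation and a (mostly) verified base case, but not a theorem.
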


\subsection{Constraint Filtering}\label{subsec:constraint_filtering}

The linear problem of checking the feasibility of an augmented constraint set for circuit $(N,M,f)$ with $A$ auxiliaries has difficulty $\mathcal{O}(2^{N+M} (N+M+A)^2)$. If we need to search through a large amount of auxiliary functions, this may still be quite expensive. In practice, however, it turns out that the artificial variables are not evenly distributed across the rows of the constraint matrix, and therefore we can cut down on the factor of $2^M$ quite substantially.

For a fixed circuit $(N,M,f)$ and an infeasible auxiliary function $g:\Sigma^{N+M}\to \Sigma^A$, the constraint matrix $B$ serves only to verify the infeasibility of $g$ and compute the criterion $\varrho$. This matrix is quite tall: $B$ has $2^N\cdot (2^M - 1)$ rows but only $M+A + \binom{N+M+A}{2} - \binom{N}{2}$ columns. This results in an exceptionally over-determined system in which we expect many redundant constraints. Now consider a matrix $B'$ consisting of only a subset of the rows in $B$ and the reduced problem $B'u\geq 1$ with feasibility criterion $\varrho'$. Evidently $\varrho'$ is a lower bound on $\varrho$ which is  cheaper to compute than $\varrho$. Therefore we may reasonably want to know: \textit{(i)} How well does $\varrho'$ approximate $\varrho$, as a function of the number of constraints removed from $B$---and relatedly, \textit{(ii)} If $\varrho' = 0$, what is the probability that $\varrho = 0$? If it is in fact a good approximation, we can use it as an approximate criterion for the first phase of the algorithm and thus save a lot of time. This is indeed the case: Figure \ref{fig:rhoi} demonstrates that smaller constraint sets serve as good approximations of \ref{fig:rhoi} and Figure \ref{fig:minnumconstraints} illustrates that they detect infeasibility with high probability.

As the size of $A$ is increased (i.e. as threshold functions are added in Algorithms \ref{alg:greedy} and \ref{alg:descent} and $g$ becomes more feasible) we expect that the requisite size of the matrices in this filtration will also grow. It is therefore useful to consider a series of smaller linear problems of various sizes obtained by incrementally adding constraints from $B$; that is, to consider a filtration $\{B_i\}_{i\in [0..k]}$ of the constraint matrix $B$ rather than a single reduced matrix $B'$.

Each row of $B$ is defined by a choice of input $\sigma \in \Sigma^N$ and a choice of incorrect output $\omega \in \Sigma^M, \omega \neq f(\sigma)$. In practice, rows whose incorrect output $\omega$ is closer in Hamming distance to the correct output $f(\sigma)$ contribute more to the heuristic $\varrho$ and are collectively more difficult to satisfy. Therefore, a natural choice of filtration is $\{B_i\}_{1 \leq i \leq M}$, where $B_i$ is the matrix consisting of all rows defined by choices of incorrect output $\omega$ such that $d(\omega, f(\sigma)) \leq i$ and $d$ denotes Hamming distance. We call $B_i$ the `$i$th local constraints' because it requires that $(\sigma, f(\sigma))$ be the absolute minimum of the input level $\mathcal L_\sigma$ on the Hamming ball of radius $i$ around $\sigma$, for every $\sigma$. Note that $B_1$ has $M$ rows per input level, and $B_i$ adds $\binom{M}{i}$ rows to each input level relative to $B_{i-1}$.

It turns out that the artificial variables are mostly concentrated in the local constraints with $i$ small (see Figure \ref{fig:rhoi}), which also have far fewer rows. Therefore, testing with the first or second local constraints provides a reasonably good lower bound to the total sum of artificial variables for the whole problem at a fraction of the computational cost. If we filter possible candidate auxiliary function candidates with the $X_1$ constraint set, each linear program solve takes has difficulty $\mathcal{O}(2^NM(N+M+A)^2)$, an improvement over the whole problem by a factor of $2^M/M$. In practice, we first construct a solution which satisfies $B_1$ or $B_2$ which we then use as a starting point to search for a solution to $B$, thus saving a significant amount of effort.

\begin{figure}[h]
    \includegraphics[width=0.99\linewidth]{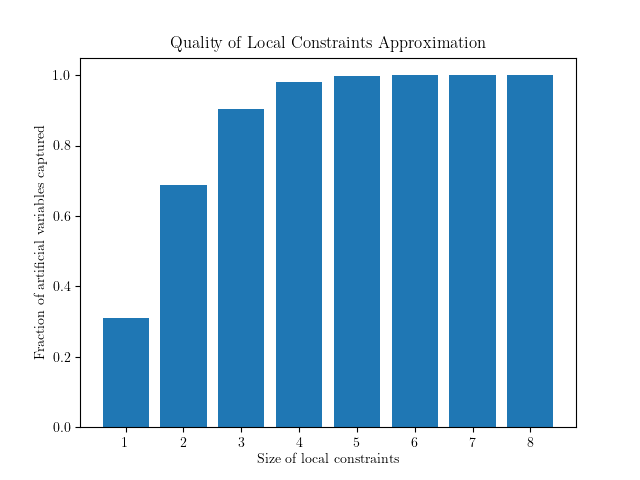}
    \caption{Proportion of artificial variables captured by the $i$th local constraints, $\varrho_i/\varrho$, plotted against $i$ for the problem of $4 \times 4$ integer multiplication with 12 auxiliaries, sampled randomly from a set of weakly neutralizable threshold functions, averaged over 50 runs. Observe that $i=3$ captures almost all of the artificial variables.}
    \label{fig:rhoi}
\end{figure}
\begin{figure}[h]
    \includegraphics[width=0.99\linewidth]{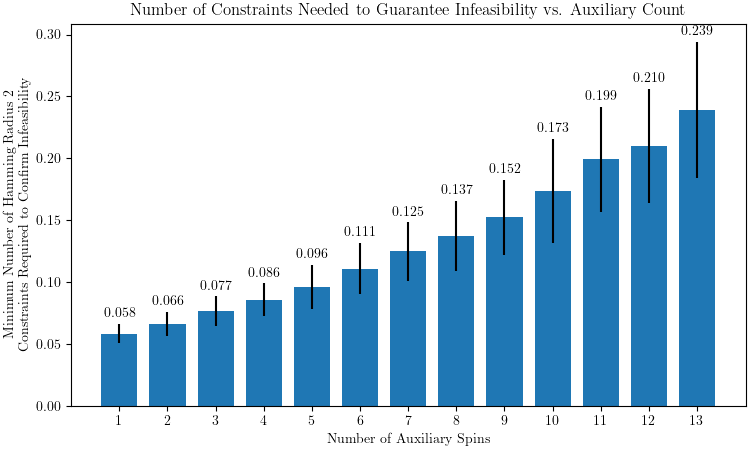}
    \caption{The minimum number of hamming radius 2 constraints required to confirm the infeasibility of randomly chosen auxiliary functions of specified sizes, averaged across 500 runs. The minimum number of constraints needed to detect infeasibility was obtained by performing a binary search on a random maximal filtration of $B_2$, the 2nd local constraints. We emphasize that the $y$-axis measures the number of requisite constraints as a fraction of all 2nd local constraints; the collection of all 2nd local constraints themselves only account for $\frac{9216}{65280} \approx 0.1412$ of the total number of constraints.}
    \label{fig:minnumconstraints}
\end{figure}

\section{Results}\label{sec:results}

Our chosen benchmark problem is the implementation of integer multiplication circuits. We denote the problem of finding sets of $a$ threshold functions which solve the problem of $n \times m$ integer multiplication as \MUL{$n$}{$m$}{$a$}. Our methods were successful in producing many solutions to \MUL{3}{3}{3}, \MUL{3}{4}{5}, and \MUL{4}{4}{12}. In each case, our solution represents the current optimum in total spin count by a large margin. Previously, Andriyash~\cite{Andriyash_2016} constructed solutions to \MUL{3}{3}{42} and \MUL{4}{4}{88} on a D-Wave system. Our results represent a significant reduction in total circuit size, from $54$ to $15$ total spins for $3 \times 3$ multiplication and from $104$ to $28$ total spins for $4 \times 4$. Therefore, our results reduce the total circuit layout area by a factor of around $3.5$ compared to previous designs. By reducing the total number of spins substantially, we have decreased the cost of implementing such circuit in hardware.

\subsection{Runtimes \& Example Data}

Experiments were run on dual socket compute nodes with 64 cores per AMD EPYC 7713 socket and two threads per core for a total of 256 processors per node. Average runtimes for successful solutions to our benchmark problems were as follows:
\begin{center}
\begin{tabular}{|c|c|c|c|}
\hline
Problem & Average Runtime (s)\\
\hline
\MUL{3}{3}{3} &  902\\
\MUL{3}{4}{5} & 23,520\\
\MUL{4}{4}{12} & 67,890\\
\hline
\end{tabular}
\end{center}

Figure \ref{fig:results} graphically shows example solutions to the three benchmark problems. Each row depicts a single threshold function's weight vector, showing how much that threshold function depends on each of the original $N+M$ spins. 

\begin{figure}
   
    \includegraphics[width=0.99\linewidth]{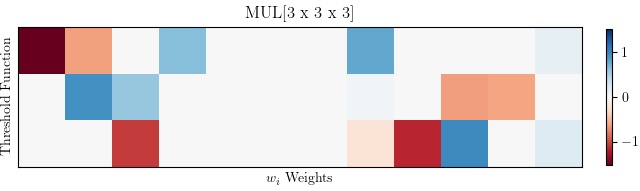}
    \includegraphics[width=0.99\linewidth]{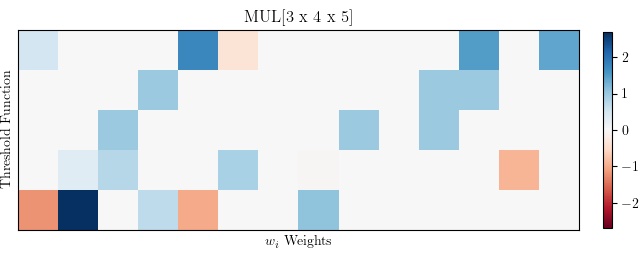}
    \includegraphics[width=0.99\linewidth]{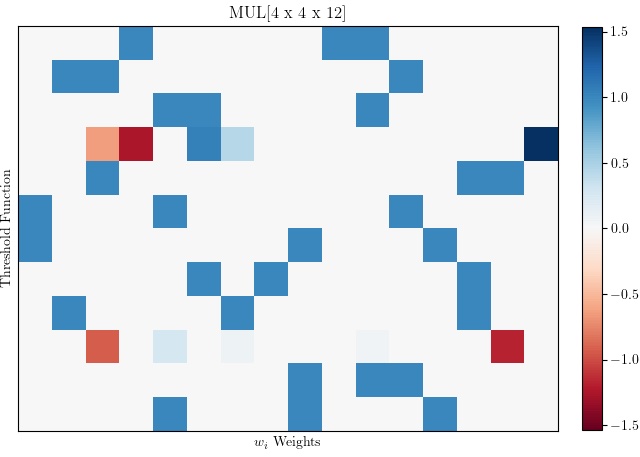}
    \caption{Examples of feasible sets of sparse auxiliary threshold functions for each of our benchmark problems. Each row is the weight vector $w$ of a threshold function.}
     \label{fig:results}
\end{figure}

\section{Conclusion}

Our main theoretical result has made computationally tractable the large-scale searches required to solve nontrivial cases of the Reverse Ising problem. As such, we can algorithmically discover far more compact Ising Hamiltonians which realize the desired circuity than would be possible to construct by hand. This has allowed us to significantly shrink the best known minimal-spin solutions to the Ising formulations of integer multiplication circuits, improving on previously known results by more than a factor of 3. However, we believe that doing better is possible. Relaxing to more general classes of auxiliary functions beyond tuples of threshold functions will likely allow for even smaller circuits, though it is much more mathematically difficult. Additionally, we wish to find ways to incorporate practical considerations such as energy gap, dynamic range, and graph structure into our model.

The practical feasibility of Ising computing, like other probabilistic, distributed, and quantum-inspired approaches, depends on the development of a strong theoretical foundation for the analysis of circuit design problems. We believe that this field is still in its infancy, that the mathematics is far from settled, and especially that we have only captured a fraction of the  the potential power of auxiliary maps for compactly realizing general functions. It is our hope that this paper will help to contribute both to the development of this theory and to the search for practical methods for programming the computers of the future.


\ifCLASSOPTIONcompsoc
  \section*{Acknowledgments}
\else
  \section*{Acknowledgment}
\fi

The authors would like to thank Dr. Karin Rabe and Dr. Gregory Moore (Department of Physics, Rutgers University) and Luisa Velasco (Department of Mathematics, University of Texas at Austin) for their feedback and editorial help.

\section{Appendix}

\subsection{Additional Mathematical Details}\label{subsec:moremath}
In this section we fill in relevant mathematical details which distract from the primary results but which are nonetheless relevant or necessary. The most vital of these is the proof of Theorem \ref{thm:augmented_constraints}.
\begin{proof}[Proof of Theorem \ref{thm:augmented_constraints}]
  Throughout this proof let $\sigma, \omega$ and $\eta$ denote elements in $\Sigma^N$, $\Sigma^M$ and $\Sigma^A$ respectively. Suppose first that the circuit $(N, M, f)$ is solvable by an Ising system $(X, H)$. Define $g:\Sigma^N\times \Sigma^M \to \Sigma^A$ to be the auxiliary component of the minimizer with respect to $N\cup M$:
  \begin{align}
    g(\sigma, \omega) := \argmin_{\eta\in \Sigma^A} H(\sigma, \omega, \eta).
  \end{align}
  By definition of $g$, the circuit $(N\cup M, A, g)$ is solvable by the Ising system $(X, H)$. This means for some $\eta' \in \Sigma^A$ we have that
  \begin{align}
    H(\sigma, \omega, \eta) > H(\sigma, f(\sigma), \eta') \geq H(\sigma, f(\sigma), g(\sigma, f(\sigma)))
  \end{align}
  for all $\eta \neq \eta'$. Hence $H$ satisfies the weak neutralizability condition.

  Since $(X,H)$ solves the circuit $(N, M, f)$,
  \begin{align}
    H(\sigma, \omega, \eta) > H(\sigma, f(\sigma), g(\sigma, f(\sigma)))
  \end{align}
  for all $\omega \neq f(\sigma)$ and all $\eta$, so in particular,
  \begin{align}
    H(\sigma, \omega, g(\sigma, \omega)) > H(\sigma, f(\sigma), g(\sigma, f(\sigma))).
  \end{align}
  Thus $(X, H)$ satisfies the $g$-augmented constraints.

  \vspace{1.5em}

  Now suppose that $g$ is an arbitrary function such that $(N\cup M, A, g)$ is an abstract circuit solvable by an Ising system $(X, R)$ whose Hamiltonian $R$ satisfies (\ref{eqn:weak-neutralizability}) and that $(X,S)$ is an Ising system with Hamiltonian $S$ which satisfies the $g$-augmented constraints. Consider the family of Ising Hamiltonians $H_\lambda = S + \lambda R$ parameterized by $\lambda\in \mathbb R$. We show that for sufficiently large $\lambda$, $H_\lambda$, $(X,H_\lambda)$ together with auxiliary array $g(\sigma) = g(\sigma, f(\sigma))$ satisfies the weak constraints and hence solves the circuit $(N,M,f)$.

  Fix $\sigma$ and $\omega \neq f(\sigma)$, and consider first the case that $\eta = g(\sigma, \omega)$. Then
  \begin{align}
  \begin{split} 
    &\phantom{\iff}H_\lambda(\sigma, \omega, \eta) - H_\lambda(\sigma, f(\sigma), g(\sigma)) > 0 \\
    &\iff S(\sigma, \omega, \eta) - S(\sigma, f(\sigma), g(\sigma)) 
    \\&\hspace{2em} \lambda (R(\sigma, \omega, \eta) - R(\sigma, f(\sigma), g(\sigma))) > 0 \\
    &\iff S(\sigma, \omega, g(\sigma, \omega)) - S(\sigma, f(\sigma), g(\sigma, f(\sigma))) \\
    &\hspace{1em}+ \lambda R(\sigma, \omega, g(\sigma, \omega)) - \lambda R(\sigma, f(\sigma), g(\sigma, f(\sigma))) > 0 \\ 
    &\iff S(\sigma, \omega, g(\sigma, \omega)) - S(\sigma, f(\sigma), g(\sigma, f(\sigma))) > 0.
  \end{split}
  \end{align}
  Note that the independence of the final biconditional above follows from the weak neutralizability of $R$. Now suppose that $\eta \neq g(\sigma, \omega)$. Set
  \begin{align}
  \alpha = \min_{\substack{\omega\in \Sigma^M \\ \omega \neq f(\sigma)}} R(\sigma, \omega, \eta) - R(\sigma, f(\sigma), g(\sigma, f(\sigma))),
  \end{align}
  noting that by (\ref{eqn:weak-neutralizability}), the assumption that $(X,R)$ solves $(N\cup M, A, g)$ and because $\eta \neq g(\sigma, \omega)$ we have
  \begin{align}
    R(\sigma, \omega, \eta) > R(\sigma, \omega, g(\sigma, \omega)) \geq R(\sigma, f(\sigma), g(\sigma))
  \end{align}
  which in turn implies that $\alpha > 0$. Additionally set
  \begin{align}
    \beta = \max_{\sigma\in \Sigma^X} S(\sigma, f(\sigma), g(\sigma, f(\sigma))) - S(\sigma, \omega, \eta).
  \end{align}
  We then have
  \[
   H_\lambda(\sigma, \omega, \eta) - H_\lambda(\sigma, f(\sigma), g(\sigma, f(\sigma))) > 0
    \]
    \[\iff\]
  \[
    \lambda ~>~ \frac{S(\sigma, f(\sigma), g(\sigma, f(\sigma))) - S(\sigma, \omega, \eta)}{R(\sigma, \omega, \eta) - R(\sigma, f(\sigma), g(\sigma, f(\sigma)))}.
  \]
  Choosing $\lambda > \beta/\alpha$ ensures this is satisfied for all $\sigma \in \Sigma^X$.
\end{proof}
\subsubsection{Ising Symmetry Proofs}\label{subsubsec:ising_symmetry_proofs}
Here we prove that spin actions and coordinate permutations are Ising circuit symmetries. We denote by $f_\alpha$ and $H_\alpha$ $\alpha f$ and $\alpha H$ respectively.
\begin{lem}\label{lem:spin_actions_are_nice}
    If $(N,M,f)$ is a circuit, $(X,H)$ an Ising system, and $\alpha\in \Sigma^X$ a spin action, then for $\sigma \in \Sigma^N$ and $\omega \in \Sigma^M$ we have $(\alpha H)(\sigma, \omega) = H(\alpha|_N\cdot \sigma, \alpha|_M\cdot \omega)$.
\end{lem}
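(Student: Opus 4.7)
The plan is to unfold both sides using the virtual-spin/inner-product form of the Hamiltonian (Equation~\ref{eqn:virtual_spin_convention}) and reduce the claim to the single multiplicative identity $v(\alpha)\cdot v(\sigma) = v(\alpha\cdot\sigma)$, where the dot denotes componentwise multiplication of the virtual spin vectors. Once this identity is in hand, both sides of the lemma are manifestly inner products against the same vector and the equality is immediate.

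First I would write $H(\sigma)=\langle u, v(\sigma)\rangle$ where $u$ is the coefficient vector of $H$, and recall from Section~\ref{subsubsec:ising_systems} that the spin action on Hamiltonians is defined by $\alpha u = v(\alpha)\cdot u$. Then
\begin{align*}
  (\alpha H)(\tau) = \langle v(\alpha)\cdot u,\, v(\tau)\rangle = \langle u,\, v(\alpha)\cdot v(\tau)\rangle,
\end{align*}
since componentwise multiplication commutes past the inner product. So it suffices to show that $v(\alpha)\cdot v(\tau) = v(\alpha\cdot\tau)$ for any $\alpha,\tau\in \Sigma^X$.

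Second, I would verify this multiplicative identity on each of the two blocks of the virtual spin. On the linear block, $v(\alpha)\cdot v(\tau)$ restricts to the pointwise product $\alpha(i)\tau(i)$, which is exactly the linear block of $v(\alpha\cdot\tau)$ by definition of the spin action. On the quadratic block, the entries are $\alpha(i)\alpha(j)\tau(i)\tau(j) = (\alpha\cdot\tau)(i)(\alpha\cdot\tau)(j)$, which are the entries of $(\alpha\cdot\tau)\otimes(\alpha\cdot\tau)$. This is a short, routine check, and I expect no conceptual difficulty here.

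Finally, I would specialize $\tau = (\sigma,\omega)$ regarded as an element of $\Sigma^{N\cup M}$ (or further padded on the auxiliary block as needed). Since the spin action restricts componentwise, $\alpha\cdot(\sigma,\omega) = (\alpha|_N\cdot\sigma,\, \alpha|_M\cdot\omega)$, and combining this with the identity above yields
\begin{align*}
  (\alpha H)(\sigma,\omega) = \langle u, v(\alpha|_N\cdot\sigma,\, \alpha|_M\cdot\omega)\rangle = H(\alpha|_N\cdot\sigma,\, \alpha|_M\cdot\omega),
\end{align*}
as desired. The only place a subtle issue could arise is the footnote after Equation~\ref{eqn:virtual_spin_convention}, which notes that the virtual spin embedding uses only the upper-triangular part of $\sigma\otimes\sigma$; but the multiplicativity argument above goes through unchanged on the upper-triangular entries, so this is not a genuine obstacle. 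Overall the lemma is essentially a bookkeeping statement and the main (trivial) step is verifying multiplicativity of $v$.
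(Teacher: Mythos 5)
Your proposal is correct and follows essentially the same route as the paper's proof: both write $H$ as $\langle u, v(\cdot)\rangle$, move the componentwise multiplication by $v(\alpha)$ across the inner product, and use the multiplicativity $v(\alpha)\cdot v(\sigma,\omega) = v(\alpha|_N\cdot\sigma,\alpha|_M\cdot\omega)$. Your explicit verification of that identity on the linear and quadratic blocks is a detail the paper leaves implicit, but the argument is identical.
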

\begin{proof}
    If we let $p$ be the coefficient vector of $H$, then
\begin{align}        
\begin{split}
    \langle \alpha u, v(\sigma, \omega))\rangle &=\langle v(\alpha) \cdot u, v(\sigma, \omega))\rangle \\
    &=\langle u, v(\alpha) \cdot v(\sigma, \omega))\rangle  \\
    &=\langle u, v(\alpha|_N\cdot \sigma, \alpha|_M\cdot \omega))\rangle.
\end{split}
\end{align}
\end{proof}
\begin{prop}\label{prop:spin_actions_are_symmetries}
    Spin actions are Ising circuit symmetries.
\end{prop}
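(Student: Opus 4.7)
The plan is to reduce everything to Lemma \ref{lem:spin_actions_are_nice}, extended to include the auxiliary component, and then exploit the fact that $\alpha|_Y \cdot \alpha|_Y$ is the all-ones vector on $\Sigma^Y$ for any subset $Y \subseteq X$ (since every entry is $\pm 1$). The identity I need is
\begin{equation*}
    (\alpha H)(\sigma, \omega, \eta) = H(\alpha|_N \cdot \sigma,\, \alpha|_M \cdot \omega,\, \alpha|_A \cdot \eta),
\end{equation*}
which is proved identically to Lemma \ref{lem:spin_actions_are_nice}: by pushing $v(\alpha)$ into $v(\sigma,\omega,\eta)$ inside the inner product $\langle \alpha u, v(\sigma,\omega,\eta)\rangle = \langle u, v(\alpha)\cdot v(\sigma,\omega,\eta)\rangle$, and using that $v$ is multiplicative componentwise with respect to the spin action.

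Next I would unpack what it means for $\alpha$ to act as an Ising circuit symmetry. Concretely, I will show: if the circuit $(N,M,f)$ is solvable by the Ising system $(X,H)$ with auxiliary function $g : \Sigma^N \to \Sigma^A$, then $(N,M,\alpha f)$ is solvable by $(X,\alpha H)$ with the auxiliary function
\begin{equation*}
    (\alpha g)(\sigma') := \alpha|_A \cdot g(\alpha|_N \cdot \sigma').
\end{equation*}
By the weak constraints, the hypothesis is $H(\sigma,\omega,\eta) > H(\sigma, f(\sigma), g(\sigma))$ whenever $\omega \neq f(\sigma)$. Applying the identity above, the desired inequality for $\alpha H$ at $(\sigma', \omega', \eta')$ with $\omega' \neq (\alpha f)(\sigma')$ becomes an inequality for $H$ at $(\sigma, \omega, \eta) := (\alpha|_N\sigma', \alpha|_M\omega', \alpha|_A\eta')$. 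Using $\alpha|_M \cdot \alpha|_M = \mathbf{1}$, I get $\alpha|_M \omega' \neq \alpha|_M (\alpha f)(\sigma') = f(\alpha|_N \sigma')$, i.e.\ $\omega \neq f(\sigma)$, so the transformed inequality is exactly an instance of the hypothesis. The right-hand side $(\alpha H)(\sigma', (\alpha f)(\sigma'), (\alpha g)(\sigma'))$ similarly simplifies to $H(\sigma, f(\sigma), g(\sigma))$, again by cancelling $\alpha|_M \cdot \alpha|_M$ and $\alpha|_A \cdot \alpha|_A$.

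Finally, to show the symmetry is genuinely bijective on solutions, I would observe that spin actions form a group isomorphic to $(\mathbb{Z}/2\mathbb{Z})^{|X|}$ with $\alpha^2 = \mathrm{id}$, so the argument above applied to $\alpha$ acting on $(\alpha H, \alpha f, \alpha g)$ recovers $(H,f,g)$. This gives the converse direction for free, establishing that $\alpha$ preserves solvability in both directions and hence qualifies as an Ising circuit symmetry.

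I do not anticipate a real obstacle here; the content is almost entirely bookkeeping. The only subtlety is being careful that the action on $g$ must include both a change-of-variables in its domain (via $\alpha|_N$, or $\alpha|_{N\cup M}$ in the augmented-constraint setting) and a componentwise flip in its codomain (via $\alpha|_A$), mirroring exactly how $\alpha$ acts on $f$. Once this definition is made, the identity on $\alpha H$ immediately matches the two sides of the weak constraints under the change of variables, and the $\pm 1$ cancellation finishes the proof.
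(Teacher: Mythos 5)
Your proposal is correct and follows essentially the same route as the paper's proof: both reduce to the identity of Lemma~\ref{lem:spin_actions_are_nice} (pushing $v(\alpha)$ through the inner product) and then observe that $\omega \neq f_\alpha(\sigma)$ if and only if $\alpha|_M\cdot\omega \neq f(\alpha|_N\cdot\sigma)$, so the transformed constraints are a relabeling of the original ones. Your explicit handling of the auxiliary component and the remark that $\alpha^2 = \mathrm{id}$ gives the converse are harmless additions the paper leaves implicit.
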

\begin{proof}
    Fix a spin action $\alpha \in \Sigma^X$, a circuit $(N,M,f)$ and an Ising system $(X,H)$. We must show that whenever $(X, H)$ solves $(N,M,f)$, $(X,H_\alpha)$ solves $(N,M,f_\alpha)$. By Lemma \ref{lem:spin_actions_are_nice},
    \begin{align}
    \begin{split}
        &H_\alpha(\sigma, \omega) - H_\alpha(\sigma, f_\alpha(\sigma))\\
        &= \langle \alpha u, v(\sigma, \omega) - v(\sigma, (\alpha f)(\sigma))\rangle \\
        &=\langle v(\alpha) \cdot u, v(\sigma, \omega) - v(\sigma, \alpha|_M \cdot f(\alpha_N \cdot \sigma))\rangle \\
        &=\langle u, v(\alpha) \cdot v(\sigma, \omega) - v(\alpha)\cdot v(\sigma, \alpha|_M \cdot f(\alpha_N \cdot \sigma))\rangle \\
        &=\langle u, v(\alpha|_N\cdot \sigma, \alpha|_M\cdot \omega) - v(\alpha|_N\cdot \sigma, f(\alpha_N \cdot \sigma))\rangle \\
        &=H(\alpha|_N\cdot \sigma, \alpha|_M\cdot \omega) - H(\alpha|_N, f(\alpha_N\cdot \sigma)).
    \end{split}
    \end{align}
    Hence
    \begin{align}
        H_\alpha(\sigma, \omega) - H_\alpha(\sigma, f_\alpha(\sigma)) > 0
    \end{align}
    if and only if 
    \begin{align}
        H(\alpha|_N\cdot \sigma, \alpha|_M\cdot \omega) - H(\alpha|_N, f(\alpha_N\cdot \sigma)) > 0.
    \end{align}
    By noting that $\omega \neq f_\alpha(\sigma)$ if and only if $\alpha|_M\cdot \omega \neq f(\alpha|_N \cdot \sigma)$, we are done.
\end{proof}
\begin{prop}\label{prop:coordinate_permutations_are_symmetries}
    Coordinate permutations are Ising circuit symmetries.
\end{prop}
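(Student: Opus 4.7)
The plan is to mirror the proof of Proposition \ref{prop:spin_actions_are_symmetries} almost verbatim, replacing the pointwise spin multiplication by coordinate relabelings. The key first step will be to establish a transfer identity analogous to Lemma \ref{lem:spin_actions_are_nice}: for the coordinate permutation action $(\alpha, \beta)$ defined in Section \ref{subsubsec:coordinate_permutations}, I want to show that the transformed Hamiltonian evaluated at a state $(\sigma, \omega)$ agrees with the original Hamiltonian evaluated at an appropriately reindexed state, e.g.\ of the form
\begin{align*}
  H_{\alpha,\beta}(\sigma, \omega) \;=\; H(\sigma \circ \alpha,\; \omega \circ \beta^{-1})
\end{align*}
(up to the conventions fixed for $\alpha$ and $\beta$). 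This follows directly from the definition of the action on the coefficient vector via a change of summation index: the transformation $h_i \mapsto h_{\beta(i)}$ becomes, under $j = \beta(i)$, a precomposition of the output by $\beta^{-1}$, and the quadratic coefficient reshuffle is an analogous bookkeeping on each of the input-input, input-output, and output-output coupling sums.

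Once the transfer identity is in hand, the rest of the argument is essentially a transcription of the spin-action proof. Fix an input $\sigma$ and an incorrect output $\omega \neq (\beta f \alpha)(\sigma)$. Applying the transfer identity to both $H_{\alpha,\beta}(\sigma, \omega)$ and $H_{\alpha,\beta}(\sigma, (\beta f \alpha)(\sigma))$ rewrites their difference as
\begin{align*}
  H(\sigma', \omega') - H(\sigma', f(\sigma')),
\end{align*}
where $\sigma'$ and $\omega'$ are the reindexed input and output. Because $\alpha$ and $\beta$ are bijections, the hypothesis $\omega \neq (\beta f \alpha)(\sigma)$ translates into $\omega' \neq f(\sigma')$, so the right-hand side is strictly positive by the assumption that $(X, H)$ solves $(N, M, f)$. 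This establishes the weak constraints for $(X, H_{\alpha,\beta})$ with respect to the permuted circuit $(N, M, \beta f \alpha)$, which is exactly what it means for coordinate permutations to be Ising circuit symmetries.

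The main obstacle is purely notational. The subtlety lies in pinning down a consistent convention for whether each permutation precomposes by itself or its inverse on each side, and in verifying that the prescribed extension of $\alpha$ and $\beta$ to the identity outside their respective domains interacts cleanly with the cross couplings $J_{i,j}$ where $i \in N$ and $j \in M$. Once those conventions are fixed, every step is a line-by-line algebraic identification of the same kind used in Proposition \ref{prop:spin_actions_are_symmetries}, with no further conceptual ingredient required.
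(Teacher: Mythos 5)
Your argument is correct in outline, but it takes a genuinely different route from the paper's: you work at the level of the Hamiltonian, first establishing a transfer identity analogous to Lemma \ref{lem:spin_actions_are_nice} and then transcribing the spin-action proof, whereas the paper argues at the level of the constraint matrix $B$, observing that the input permutation permutes the rows of $B$ and the output permutation permutes its columns, so that feasibility of $Bu > 0$ is preserved by permuting $u$ accordingly. The two viewpoints are equivalent in content, but yours is more explicit about \emph{which state} the transformed Hamiltonian is evaluated at, and that is exactly where the one real subtlety lives. The ``notational obstacle'' you defer is in fact not purely notational: your transfer identity $H_{\alpha,\beta}(\sigma,\omega) = H(\sigma\circ\alpha,\ \omega\circ\beta^{-1})$ forces the action of $\alpha$ on the coefficient vector to be nontrivial, namely $h_i\mapsto h_{\alpha(i)}$ and $J_{i,j}\mapsto J_{\alpha(i),\alpha(j)}$ for indices in $N$, including the cross couplings $J_{i,j}$ with $i\in N$, $j\in M$, which must go to $J_{\alpha(i),j}$. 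Section \ref{subsubsec:coordinate_permutations} instead declares the $\alpha$-action on $h$ and $J$ to be the identity, and with that convention the claim fails: take $N=\{1,2\}$, $M=\{3\}$, $f(\sigma)=\sigma(1)$, $H=-\sigma(1)\sigma(3)$, and $\alpha$ the transposition of $1$ and $2$; then $H$ solves $(N,M,f)$ but not $(N,M,f\circ\alpha)$, while $H'=-\sigma(2)\sigma(3)$ does. So when you pin down your conventions you must adopt the nontrivial $\alpha$-action that your identity implies; doing so also repairs the corresponding step in the paper's own proof, which asserts that $B_\alpha$ is a pure row permutation of $B$ even though the columns indexed by pairs $(i,j)$ with $i\in N$ and $j\in M$ are shuffled as well. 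Once that is fixed, your bijectivity argument translating $\omega\neq(\beta f\alpha)(\sigma)$ into $\omega'\neq f(\sigma')$ closes the proof exactly as in Proposition \ref{prop:spin_actions_are_symmetries}.
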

\begin{proof}
    Fix a circuit $(N,M,f)$, an Ising circuit $(X,H)$, a permutation $\alpha$ of $N$, and a permutation $\beta$ of $M$. Additionally let $B$, $B_\alpha$, and $B_\beta$ be the constraint matrices given by the circuits $(N,M,f)$, $(N,M,\alpha f)$, and $(N,M,\beta f)$ respectively (see Equations \ref{eqn:weak-constraints} and \ref{eqn:constraint_matrix} for the definition of these matrices).
    
    In Section \ref{subsubsec:coordinate_permutations} we defined the action of $\alpha$ on $f$ by $\alpha f := f\circ \alpha$. This results in a row permutation of $B$; that is, $B_\alpha$ is obtained by permuting the rows of $B$. This means any Ising system $(X,H)$ which solves $(N,M,f)$ also solves $(N,M,\alpha f)$ without modification.

    Recall we defined the action of $\beta$ on $f$ by $\beta f:= \beta \circ f$. The matrix $B_\beta$ is obtained by permuting the columns of $B$, so if $(X,H)$ solves $(N,M,f)$ we can obtain an Ising system $(X,\beta H)$ which solves $(N,M,\beta f)$ by applying the same permutation to the coefficient vector $p$ of $H$. Writing the coefficient vector $u$ using the $h$ and $J$ notation of Equation \ref{eqn:hJ_convention}, we see that
    \begin{align}
        \beta h_i := h_{\beta(i)}, \hspace{1em} \beta J_{ij} := J_{\beta(i), \beta(j)}
    \end{align}
    is the correct permutation. Note that we extend $\beta$ to a permutation on all of $X$ here by defining $\beta(i) := i$ for all $i\in X\setminus M$.
\end{proof}

\begin{rmk}\label{rmk:hyperoctahedral_group}
    It is worth noting that the group generated by spin actions and coordinate permutations on $X$ is precisely the \textit{hyperoctahedral group} $B_X$, the symmetry group of the hypercube of dimension $|X|$ \cite{kerber}. Thought of this way, $\mathcal G(N,M) = B_N\times B_M\times B_A$.
\end{rmk}

\subsection{Linear Programming Solver Details}\label{sec:solver}
\subsubsection{Problem Formulation} Our goal is to solve the linear programming problem 
\begin{align}
		\min_{\phi, \rho}\ \langle 1, \rho\rangle &\text{ s.t. } B\phi + \rho \geq v, \rho \geq 0
\end{align}
Note that this is equivalent to
\begin{align}
		\max_{\lambda, s}\ \langle b, \lambda \rangle &\text{ s.t. } A^T \lambda + s = c, s \geq 0
\end{align}
If we make the identification
\begin{align}
		&b = \begin{bmatrix}
				0\\
				-1\end{bmatrix} 
		&c = \begin{bmatrix}
				-v\\
				0\end{bmatrix}
		&&A^T = \begin{bmatrix}
				-B & -I\\
				0  & -I
		\end{bmatrix}
\end{align}
Since if $\lambda = (\lambda_1, \lambda_2)$ and $s = (s_1, s_2)$, then $A^T\lambda + s = c$ can be written as the pair of Equations $-B\lambda_1 - \lambda_2 + s_1 = -v$ and $-\lambda_2 + s_2 = 0$, which can be re-arranged as $s_2 = \lambda_2$ and $s_1 = B\lambda_1 + \lambda_2 - v$, so $s \geq 0$ actually means $\lambda_2 \geq 0$ and $B\lambda_1 + \lambda_2 \geq v$. This recovers our original problem with the identification $\phi := \lambda_1$, $\rho := \lambda_2$. A word on the dimensions. Suppose that $B \in \R^{m \times n}$, with $m \gg n$. Then $A^T \in \R^{2m \times n+m}$, so $\lambda, b \in \R^{n+m}$ and $s, c, x \in \R^{2m}$. We can also go ahead and set $r_b \leftarrow Ax - b$ and $r_c \leftarrow A^T\lambda + s - c$. 

\subsubsection{Solving Systems Involving a Matrix and its Transpose}

We now discuss solutions to systems of the form $(AKA^T) p = q$. Let $A$ be as defined above and $K$ be some general diagonal matrix with diagonal vector $k$ split into $k_1, k_2 \in \mathbb R^m$. The only actual systems of Equations that we need to solve in this algorithm will be of the form $(AKA^T)p = q$. We will derive a general algorithm for solving this system in an efficient manner by taking advantage of the structure of $A$. Note that $p,q \in \mathbb R^{n+m}$.
\begin{align}
		AKA^T = 
		\begin{bmatrix}
				-B^T & 0\\
				-I & -I
		\end{bmatrix}
		\begin{bmatrix}
				K_1 & 0\\
				0 & K_2
		\end{bmatrix}
		\begin{bmatrix}
				-B & -I\\
				0 & -I
		\end{bmatrix}\\
		= 
		\begin{bmatrix}
				B^TK_1B & B^TK_1\\
				K_1B & K_1 + K_2
		\end{bmatrix}
\end{align}
Now, we can apply block UDL-decomposition. It will be convenient if the matrix that we have to invert is actually the bottom right (since it is diagonal), and therefore we need the formula (with $Q := W - XZ^{-1}Y$):
\begin{align}
		\begin{bmatrix}
				W & X\\
				Y & Z
		\end{bmatrix}
		=
		\begin{bmatrix}
				I & XZ^{-1}\\
				0 & I
		\end{bmatrix}
		\begin{bmatrix}
				Q & 0\\
				0 & Z
		\end{bmatrix}
		\begin{bmatrix}
				I & 0\\
				Z^{-1}Y & I
		\end{bmatrix}
\end{align}
Applying this, we obtain (with $D := K_1$ and $Z := K_1 + K_2$ and $\Omega := B^T(D-D^2Z^{-1})B$)
\begin{align}
		\begin{bmatrix}
				I & B^T DZ^{-1}\\
				0 & I
		\end{bmatrix}
		\begin{bmatrix}
				    \Omega & 0\\
				0 & Z
		\end{bmatrix}
		\begin{bmatrix}
				I & 0\\
				DZ^{-1} B & I
		\end{bmatrix}\\
		= 
		\begin{bmatrix}
				I & B^T DZ^{-1}\\
				0 & I
		\end{bmatrix}
		\begin{bmatrix}
			 \Omega & 0\\
			DB & Z
		\end{bmatrix}
\end{align}
Now, consider the Equation
\begin{align}
		\begin{bmatrix}
				I & B^T DZ^{-1}\\
				0 & I
		\end{bmatrix}
		\begin{bmatrix}
			y_1\\
			y_2
		\end{bmatrix}
		=
		\begin{bmatrix}
			q_1\\
			q_2
		\end{bmatrix}
\end{align}
We get $y_2 = q_2$ and $q_1 = y_1 + B^TDZ^{-1} y_2 = y_1 + B^TDZ^{-1} q_2$, so $y_1 = q_1 - B^TDZ^{-1}q_2$. Now, we want to solve
\begin{align}
		\begin{bmatrix}
			\Omega & 0\\
			DB & Z
		\end{bmatrix}
		\begin{bmatrix}
			p_1\\
			p_2
		\end{bmatrix}
		=
		\begin{bmatrix}
			q_1 - B^TDZ^{-1}q_2\\
			q_2
		\end{bmatrix}
\end{align}
Clearly $DB p_1 + Zp_2 = q_2$ implies $p_2 = Z^{-1}(q_2 - DBp_1)$, so we have reduced the problem to solving the linear system $\Omega p_1 = q_1 - B^TDZ^{-1}q_2$. Since $n$ is in general fairly small, this is actually an easy system to solve. This leads us to the following algorithm:
\begin{align}
	p_1 &\leftarrow \Omega^{-1}(q_1 - B^TDZ^{-1}q_2)\\
	p_2 &\leftarrow Z^{-1}(q_2 - DBp_1)
\end{align}

\subsubsection{Solving the Main System}

We need to solve systems of the form
\begin{align}
		\begin{bmatrix}
				0 & A^T & I\\
				A & 0   & 0\\
				S & 0   & X
		\end{bmatrix}
		\begin{bmatrix}
				\Delta x\\
				\Delta \lambda\\
				\Delta s
		\end{bmatrix} 
		= 
		\begin{bmatrix}
				-r_c\\
				-r_b\\
				L
		\end{bmatrix}
\end{align}
Which, written out as Equations, is
\begin{align}
		A^T\Delta \lambda + \Delta s = -r_c\\
		S\Delta x + X\Delta s = L\\
		A\Delta x = -r_b
\end{align}
We can re-arrange to determine that $\Delta s = -r_c - A^T\Delta \lambda$ and $\Delta x = S^{-1}(L - X\Delta s) = S^{-1}(L - X(-r_c - A^T\Delta \lambda))$, upon which the last Equation becomes
\begin{align}
		AS^{-1}(L - X(-r_c - A^T\Delta \lambda)) = -r_b\\
		AS^{-1}L + AS^{-1}Xr_c + AS^{-1}XA^T\Delta \lambda = -r_b\\
		(AS^{-1}XA^T) \Delta \lambda = -r_b - AS^{-1}(L + Xr_c)\\
		(AS^{-1}XA^T) \Delta \lambda = -Ax + b - AS^{-1}(L + Xr_c)\\
		= b - A(x + S^{-1}(L + Xr_c))
\end{align}
This gives us the following algorithm:
\begin{align}
	\Delta \lambda &\leftarrow (AS^{-1}XA^T)^{-1}(b-A(x + S^{-1}(Xr_c + L)))\\
	\Delta s &\leftarrow -r_c - A^T\Delta \lambda\\
	\Delta x &\leftarrow S^{-1}(L - X\Delta s)
\end{align}

\subsubsection{Optimizations}

The matrix $M$ has around $50\%$ sparsity in practice and is a sign matrix (entries are $-1, 0$ or $1$). Therefore, it is natural to store it in CSC (Compressed Sparse Column) format with low integer precision. Profiling an implementation of the algorithm in C shows that the most expensive step by far is the calculation of the coefficient matrix for the system of Equations. Since it always has the structure $M^TDM$ for some diagonal matrix $D$, this comes down to calculating $n^2$ weighted column-column inner products of $M$, so CSC has a natural advantage over CSR. We can first observe that since this matrix is symmetric, only the upper triangle needs to be computed, which cuts the cost in half. Now, if $C_i$ is the $i$th column of $M$, we need a quick way to calculate $\langle C_i, dC_j\rangle$ for a coefficient $d$ which is not known ahead of time. Since the two columns are both sparse, and known ahead of time, we can precompute for each pair of columns $C_i, C_j$ a list $R_{ij}$ of the indices at which they are simultaneously nonzero, as well as a vector $S_{ij}$ of their products such that $S_{ij}(k) = (C_i)_{R_{ij}(k)}(C_j)_{R_{ij}(k)}$. That way, we can compute only exactly what really needs to be done, that is $\sum_k S_{ij}(k)d_{R_{ij}(k)}$. Since all of these operations are highly parallel, they lend themselves nicely to the use of a threadpool.



%

\IEEEtriggeratref{12}
\bibliography{citations}

\end{document}